\documentclass{amsart}

\usepackage{lineno}
\usepackage{amssymb}
\usepackage{amsmath}
\usepackage{amsthm}
\usepackage{mathrsfs}
\usepackage{bbm}
\usepackage{cite}
\usepackage{color}
\usepackage{enumitem}
\usepackage{pgfplots}
\usepackage[utf8]{inputenc}

\usepackage[colorlinks,hypertexnames=false]{hyperref}
\usepackage[msc-links]{amsrefs}

\usepackage{changes}

\usepackage{mathtools}
\usepackage{extarrows}
\usepackage{setspace}

\usepackage{graphicx}

\usepackage{imakeidx}
\makeindex

\hypersetup{
	colorlinks=true,
	linkcolor=black,
	filecolor=black,
	urlcolor=black,
	citecolor=black,
}

\newtheorem{thm}{Theorem}[section]

\newtheorem{defn}[thm]{Definition}

\newtheorem{proposition}[thm]{Proposition}

\newtheorem{remark}[thm]{Remark}

\newcommand{\id}{\mathop{\mathrm{id}}}
\newcommand{\End}{\mathop{\mathrm{End}}}

\numberwithin{equation}{section}

\pgfplotsset{compat=1.18} 

\begin{document}

\title[Slice Dirac-Regular Mappings]{Growth theorems for Slice Dirac-Regular Mappings over Clifford algebras}
\author{Ting Yang}
\email[Ting Yang]{tingy@ustc.mail.edu.cn}
\address{School of Mathematics and Statistics, Anqing Normal University, Anqing, 246133, China}
\author{Xinyuan Dou}
\email[Xinyuan Dou]{douxinyuan@ustc.edu.cn}
\address{Department of Mathematics, University of Science and Technology of China, Hefei 230026, China}

\keywords{Slice Dirac operator; Clifford algebras; representation formula; slice topology; growth theorems}
\thanks{This work was supported by the NNSF of China (12401104) and the Fundamental Research Funds for the Central Universities (WK0010000091).}

\subjclass[2020]{Primary: 30G35; Secondary: 32A30, 32D05}

\begin{abstract}
	In this paper, we define a class of slice Dirac-regular mappings of several variables over Clifford algebras, based on the concept of $O(3)$-stem mappings. We prove that the slice mappings vanish under the slice Dirac operator, which is equivalent to its $O(3)$-stem mappings satisfy the generalized version of the Cauchy-Riemann equation. Moreover, we establish the growth theorem for slice Dirac-regular starlike mappings in the slice cones of Clifford algebras, as well as for slice Dirac-regular k-fold symmetric mappings.
\end{abstract}

\maketitle
\section{Introduction}

  The theory of slice regular functions was initially introduced by Gentili and Struppa for quaternions \cites{gentili2006new14,gentili2007new4}, which generalized the holomorphic functions in one complex variable to the high dimensional and non-commutative case. It was extended to the setting of Clifford algebras \cites{colombo2011slice9,colombo2010extension10} and octonions  \cite{gentili2010regular5}, and then also has applications in the functional calculus for non-commutative operators \cite{sabadini2011noncommutative11}. Later, the slice analysis has been extended to more general case, include real alternative algebras  \cite{ghiloni2011slice15}, left-alternative algebras and real vector space of even dimension \cite{Dou202000425}.
  During this process, slice technique played an important role. As for the case of octonions, we can consider the octonions as a union of complex planes such that the octonions have the book structure:
  $$\mathbb{O}=\bigcup_{I\in \mathbb{S}}\mathbb{C}_I,$$
  where $I$ is the imaginary unit in $\mathbb{O},\ \mathbb{S}$ consists of all imaginary units $I$ of octonions and the complex plane $\mathbb{C}_I= Span_{\mathbb{R}}\{1,I\}$. There are abundant results in slice analysis. Different from the complex structure mentioned above,
  M. Jin and G. Ren \cite{jin2020slice1} initiated the study of the slice Dirac-regular functions over the octonion algebra $\mathbb{O},$ which decomposed the octonions as a union of quaternionic planes:
  $$\mathbb{O}=\bigcup_{I,J\in \mathbb{S},\ I\perp J}\mathbb{H}_{I,J},$$
  where $\mathbb{H}_{I,J}:=Span_{\mathbb{R}}\{1,I,J,IJ\}$ with $I,J\in \mathbb{S}$ and $I\perp J$. Reference \cite{jin2020slice1} established the representation as formula, the Cauchy–Pompeiu integral formula, and the Taylor as well as the Laurent series expansion formulas. 
  Later, Ghiloni \cite{ghiloni2021slice2} used the term slice Fueter-regular function to present slice Dirac-regular function, and proved that slice Fueter-regular functions are standard (complex) slice functions. They obtained the
  Cauchy-type integral formulas of global nature for the slice Fueter-regular functions of one variable, and a version of its Maximum Modulus Principle. 
  
  The goal of this paper is to introduce the slice Diac-regular mappings over the  Clifford algebras $(\mathbb{R}_m)^n$, generalize the growth theorems for subclasses of its normalized univalent mappings.  In addition, we also obtain the growth theorems for slice Dirac-regular k-fold symmetric mappings in the slice cones of Clifford algebras.

  \section{Priliminaries}
We shall begin with a brief discussion of some relevant content about this paper.  The real Clifford algebra $\mathbb{R}_m$ is an universal associative algebra over $\mathbb{R}$ generated by $m$ basis elements $e_1,\dots,e_m,$
subject to the relations 
    $$e_ie_j+e_je_i=-2\delta_{ij},\qquad i,j=1,\dots,m.$$
As a real vector space, $\mathbb{R}_m$ has dimension $2^m.$ Each element       $x\in \mathbb{R}_m$ can be expressed as 
\begin{displaymath}
     x= \sum_{A\in \mathcal{P}(m)}x_Ae_A,
\end{displaymath}
where 
    $$\mathcal{P}(m)=\left\{ \left(h_1,\dots,h_r \right)\in \mathbb{N}^r|\ r=1,\dots,m,1\leq h_1\le \dots\le h_r \leq m \right\}. $$
For each $A=\left\{ \left(h_1,\dots,h_r \right)\in \mathcal{P}    (m)\right\},$ the coefficients $x_A\in \mathbb{R},$ and the products $e_A:=e_{h_1}e_{h_2}\dots e_{h_r}$ are  the basis elements of the  Clifford algebra $\mathbb{R}_m.$ The unit of the Clifford algebra corresponds to $A=\emptyset,$ and we set $e_{\emptyset}=1.$ As usual, we identify the real number field $\mathbb{R}$ with the subalgebra of  $\mathbb{R}_m$ generated by the unit.

Let 
$$\mathbb{S}_m=\{J\in \mathbb{R}_m|\ J^2=-1\}.$$
The elements of $\mathbb{S}_m$ are called the square roots of $-1$ in the Clifford algebra $\mathbb{R}_m.$

Since the non-commutativity of Clifford algebras, we usually consider the subset of  $\left ( \mathbb{R}_m\right)^n$: 
the n-dimensional
 slice cone of Clifford numbers 
$$\left ( \mathbb{R}_m\right)_s^n:=\bigcup_{I\in \mathbb{S}_m}\mathbb{C}_I^n,$$
where $\mathbb{C}_I^n=\mathbb{R}^n+I\ \mathbb{R}^n\ \text{with}\ I\in \mathbb{S}_m$. It provides a suitable way to study the holomorphy on the slice $\mathbb{C}_{I_1}\times\mathbb{C}_{I_2}\dots\times\mathbb{C}_{I_n}.$

\section{Main results}

Let $A=(a_{i,j})$ be a $\ell_1\times \ell_2$ real matrix. Denote by
\begin{equation*}
    A_{\langle n, m\rangle}:=(a_{i,j}\ {\id}_{\mathbb{R}_m^n})\in [\End(\mathbb{R}_m^n)]^{\ell_1\times\ell_2},
\end{equation*}
where
\begin{equation*}
    \mathbb{R}_m^n:=(\mathbb{R}_m)^n.
\end{equation*}
and $\End(\mathbb{R}_m)$ be the class of real linear mapping from $\mathbb{R}_m^n$ to $\mathbb{R}_m^n$.

Denote by $O(3)$ the group of $3\times 3$ orthogonal real matrices. Denote
\begin{equation*}
    O^*(3):=\left\{\begin{pmatrix}
          {\id}_{\mathbb{R}_m^n}\\ & Q_{\langle n,m\rangle}
    \end{pmatrix}:Q\in O(3)\right\}\subset[\End(\mathbb{R}_m^n)]^{4\times 4}.
\end{equation*}
The theory of $O(3)$-stem mappings plays an important role in constructing slice Dirac-regular mappings, we first introduce the domain of the $O(3)$-stem mappings. Denote
\begin{equation*}
    \mathrm{R}^{4n}:=\left\{\begin{pmatrix}x_0\\x_1\\x_2\\x_3\end{pmatrix}:x_\ell\in\mathbb{R}^n \subset \mathbb{R}_m^n,\ \ell=0,1,2,3.\right\}\subset(\mathbb{R}_m^n)^{4\times 1}.
\end{equation*}

{\bf Conventions:}
We regard the elements in $\mathrm{R}^{4n}$ as $4\times 1$ $\mathbb{R}_m^n$-matrices. Moreover, in this paper, the entries of the matrices are regarded either as elements of $\mathbb{R}_m^n$ or as elements of $\End(\mathbb{R}_m^n)$.
\begin{defn}
Let $D \subset \mathrm{R}^{4n}$ be an open set. $D$ is called $O(3)$-invariant, if
\begin{equation*}
    gx\in D,\qquad\forall\ x\in D\quad\mbox{and}\quad g\in O^*(3).
\end{equation*}
\end{defn}

\begin{defn}\label{stem}
	  Let $D \subset \mathrm{R}^{4n}$ be an $O(3)$-invariant open set. A mapping $\mathcal F:D\rightarrow\left (\mathbb{R}_m^n\right)^{4\times1}$
      is $O(3)$-stem, if
	\begin{equation}\label{invariant}
		\mathcal F\left (g x\right )=g \mathcal F(x),\qquad\forall\ x\in D\quad\mbox{and}\quad g\in O^*(3).
	\end{equation}
\end{defn}

In the following paper, it is assumed that the open set $D \subset \mathrm{R}^{4n}$ is $O(3)$-invariant.
A related result which is useful in the study of the property of $O(3)$-stem mappings as follows.
\begin{proposition}\label{orthogonal}
  Let
  \begin{equation*}
      \mathcal F=\begin{pmatrix}
          \mathcal F_0\\\mathcal F_1\\\mathcal F_2\\\mathcal F_3
      \end{pmatrix}:D\rightarrow(\mathbb{R}_m^n)^{4\times 1}
  \end{equation*}
  be $O(3)$-stem with $F_\ell:D\rightarrow \mathbb{R}_m^n, \ell=0,1,2,3$. Then
  \begin{equation}\label{eq-f2f3}
      \mathcal F_2(x)=
   0\quad \text{and} \quad  \mathcal F_3(x)=0,\qquad\forall\ x=\begin{pmatrix}x_0\\x_1\\0\\0\end{pmatrix}\in D,
  \end{equation}
  where $x_0,x_1\in\mathbb{R}^n\subset\mathbb{R}_m^n$.
\end{proposition}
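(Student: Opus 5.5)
The approach is to use the equivariance \eqref{invariant} with orthogonal matrices $Q\in O(3)$ that fix the first coordinate axis of $\mathbb{R}^3$. Such a $g$ fixes every point $x=(x_0,x_1,0,0)^T$, so it forces $\mathcal F(x)$ to be fixed by $g$ as well. We then pick $Q$ so that the only fixed vectors in the last three slots have vanishing second and third components.

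Concretely, fix $x=(x_0,x_1,0,0)^T\in D$ and take
\begin{equation*}
Q=\begin{pmatrix}1&0&0\\0&-1&0\\0&0&-1\end{pmatrix}\in O(3),\qquad g=\begin{pmatrix}\id_{\mathbb{R}_m^n}\\ & Q_{\langle n,m\rangle}\end{pmatrix}\in O^*(3).
\end{equation*}
By the definition of $Q_{\langle n,m\rangle}$, each entry acts as a real scalar multiple of $\id_{\mathbb{R}_m^n}$. Hence
\begin{equation*}
g\begin{pmatrix}x_0\\x_1\\x_2\\x_3\end{pmatrix}=\begin{pmatrix}x_0\\x_1\\-x_2\\-x_3\end{pmatrix},
\end{equation*}
and in particular $gx=x$ for our $x$. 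Since $D$ is $O(3)$-invariant, $gx\in D$ holds, so \eqref{invariant} applies and gives $\mathcal F(x)=\mathcal F(gx)=g\mathcal F(x)$. Reading off the third and fourth components gives $\mathcal F_2(x)=-\mathcal F_2(x)$ and $\mathcal F_3(x)=-\mathcal F_3(x)$. Since $\mathbb{R}_m^n$ is a real vector space, this yields $\mathcal F_2(x)=\mathcal F_3(x)=0$, which is \eqref{eq-f2f3}.

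There is no real obstacle. The only step needing care is to unpack the convention that a real matrix $Q$ acts on $(\mathbb{R}_m^n)^{3\times1}$ via $Q_{\langle n,m\rangle}$, that is, entrywise as real scalars. This is what makes $g$ act on both $x$ and $\mathcal F(x)$ simply by flipping the signs of the last two entries.
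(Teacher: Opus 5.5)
Your proof is correct and is essentially the paper's argument. The paper applies the equivariance with the two reflections $Q=\mathrm{diag}(1,-1,1)$ and $Q=\mathrm{diag}(1,1,-1)$ to get $\mathcal F_2(x)=-\mathcal F_2(x)$ and $\mathcal F_3(x)=-\mathcal F_3(x)$ separately, whereas you use their product $Q=\mathrm{diag}(1,-1,-1)$ (which even lies in $SO(3)$) to obtain both at once.
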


   \begin{proof}
       Suppose $\mathcal{F}$ is an ${O(3)}$-stem mapping on $D$, then by definition, $D$ is O(3)-invariant. Since $x=(x_0,x_1,0,0)\in D$, we take 
   \begin{equation*}
       g=\begin{pmatrix}
    \mathbb{I}_{n\times n}\\ & \mathbb{I}_{n\times n}\\ & & -\mathbb{I}_{n\times n}\\ & & &\mathbb{I}_{n\times n}
    \end{pmatrix},\begin{pmatrix}
    \mathbb{I}_{n\times n}\\ & \mathbb{I}_{n\times n}\\ & & \mathbb{I}_{n\times n}\\ & & &-\mathbb{I}_{n\times n}
    \end{pmatrix}
   \end{equation*}
in equation (\ref{invariant}), respectively. It is easy to deduce that \eqref{eq-f2f3} holds.
\end{proof}

Denote
\begin{equation*}
    \varphi^{I,J}:=({\id}_{\mathbb{R}_m^n},I\ {\id}_{\mathbb{R}_m^n},J\ {\id}_{\mathbb{R}_m^n},IJ\ {\id}_{\mathbb{R}_m^n}).
\end{equation*}
When there is no ambiguity, we abbreviate $\varphi^{I,J}$ as $(1,I,J,IJ)$.

Denote
\begin{equation*}
    [D]:=\{\varphi^{I,J}(x)\in(\mathbb{R}_m)^n_s:\ x\in D\mbox{ and }I,J\in\mathbb{S}_m\mbox{ with } I\perp J\}.
\end{equation*}
In particular, we define
\begin{equation*}
    B_{\mathbb{R}^{4n}}:=\{x\in{R}^{4n}:|x|<1\},
\end{equation*}
and
\begin{equation*}
	\mathbb{B}:=\{\varphi^{I,J}(x)\ \in(\mathbb{R}_m)^n_s:\ x\in D\cap B_{\mathbb{R}^{4n}}\mbox{ and } I,J\in\mathbb{S}_m\mbox{ with } I\perp J\}.
\end{equation*}

Let $I\in \mathbb{S}_m$. Denote
\begin{equation*}
    [D]_I:=[D]\bigcap \mathbb{C}_I^n.
\end{equation*}
It is easy to deduce that the above
 $\mathbb{B}$ is actually a unit ball in the set of several variables in the slice cone of real Clifford algebras $\mathbb{R}_m^n,$ i.e.,
$$\mathbb{B}=\left\{ q\in (\mathbb{R}_m)^n_s: |q|< 1\right\}.$$

Next we introduce the $O(3)$-stem mappings, which generalize the $O(3)$-stem functions of one variable to higher dimensions.

\begin{defn}\label{slice function}
    A mapping $f:[D]\rightarrow\mathbb{R}_m^n$ is called $O(3)$-slice, if there is  $O(3)$-stem $\mathcal{F}:D\rightarrow(\mathbb{R}_m)^{4n}$ such that
    \begin{equation}\label{Slice}
       f\circ\varphi^{I,J}:=\varphi^{I,J}\circ\mathcal F,\qquad\forall\ I,J\in\mathbb{S}_m\quad \mbox{with}\quad I\perp J.
    \end{equation}
    Denote by $\mathcal{S}_{O(3)}([D],  \mathbb{R}_m^n)$ the class of $O(3)$-slice mappings from $[D]$ to $  \mathbb{R}_m^n$.
\end{defn}
     
\begin{proposition}
   Let $\mathcal{F}:D\rightarrow(\mathbb{R}_m)^{4n}$ be $O(3)$-stem. There is a unique mapping $f:[D]\rightarrow \mathbb{R}_m^n$ satisfying \eqref{Slice}. Denote
   \begin{equation*}
       \mathcal{I}_{O(3)}(\mathcal{F}):=f.
   \end{equation*}
\end{proposition}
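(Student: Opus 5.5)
The uniqueness part is immediate from the definition of $[D]$. Every $q\in[D]$ has the form $q=\varphi^{I,J}(x)$ for some $x\in D$ and some $I\perp J$, so \eqref{Slice} forces $f(q)=\varphi^{I,J}(\mathcal F(x))$. The whole task is therefore existence, i.e.\ well-definedness: if $\varphi^{I,J}(x)=\varphi^{I',J'}(x')$, then $\varphi^{I,J}\mathcal F(x)=\varphi^{I',J'}\mathcal F(x')$. My plan is to show that $\varphi^{I,J}\mathcal F(x)$ equals an expression $\mathcal F_0(y)+K\mathcal F_1(y)$ that depends only on $q$.

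\textbf{Step 1 (reduction to the real-plane part).} Write $q=\varphi^{I,J}(x)=x_0+Ix_1+Jx_2+IJx_3$. Since $q\in(\mathbb{R}_m)^n_s$, we have $q=a+Kb$ with $a,b\in\mathbb{R}^n$ and $K\in\mathbb{S}_m$, so $a=x_0$. For each coordinate $k$ put $v_k=(x_{1,k},x_{2,k},x_{3,k})\in\mathbb{R}^3$. The imaginary part of the $k$-th coordinate is $v_k\cdot(I,J,IJ)$, which lies in $\mathrm{Span}_{\mathbb{R}}\{I,J,IJ\}$ and squares to $-|v_k|^2$.

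If $b\neq0$, then $K\in\mathrm{Span}\{I,J,IJ\}$, and all $v_k$ are multiples of a single unit vector $u\in\mathbb{R}^3$ with $K=\pm u\cdot(I,J,IJ)$. Replacing $u$ by $-u$ and $b$ by $-b$ if necessary, I may assume $K=u\cdot(I,J,IJ)$.

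\textbf{Step 2 (rotating to the canonical slice).} Choose $Q\in SO(3)$ with first column $u$, and set $g=\mathrm{diag}(\id,Q_{\langle n,m\rangle})\in O^*(3)$ and $y=(x_0,b,0,0)^T$, so that $x=gy$. Note that $y\in D$ because $D$ is $O(3)$-invariant. The key algebraic identity to verify is
\[
\varphi^{I,J}(gz)=\varphi^{I',J'}(z)\quad\text{for all } z,
\]
where $(I',J',I'J')=(I,J,IJ)Q$. This holds because $SO(3)$ acts on the imaginary units of the quaternion algebra $\mathbb{H}_{I,J}$ by algebra automorphisms, so $I'\perp J'$, both lie in $\mathbb{S}_m$, and $I'J'$ is indeed the third column. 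By construction $I'=K$.

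Applying the stem property \eqref{invariant} and then Proposition~\ref{orthogonal} to $y$ gives
\[
\varphi^{I,J}\mathcal F(x)=\varphi^{I,J}g\mathcal F(y)=\varphi^{K,J'}\mathcal F(y)=\mathcal F_0(x_0,b,0,0)+K\,\mathcal F_1(x_0,b,0,0).
\]

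\textbf{Step 3 (independence of choices).} The right-hand side above depends only on $(a,b,K)$. The pair $(b,K)$ is determined by $q$ up to the simultaneous change $(b,K)\mapsto(-b,-K)$. Applying \eqref{invariant} with $Q=\mathrm{diag}(-1,-1,1)$ gives
\[
\mathcal F_0(x_0,-b,0,0)=\mathcal F_0(x_0,b,0,0),\qquad \mathcal F_1(x_0,-b,0,0)=-\mathcal F_1(x_0,b,0,0),
\]
so the expression is invariant under this change.

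In the degenerate case $b=0$, every $v_k$ is $0$, so $x=(x_0,0,0,0)$ and $K$ is arbitrary. The same reflection then yields $\mathcal F_1(x_0,0,0,0)=0$, and the value reduces to $\mathcal F_0(x_0,0,0,0)$, independent of $K$.

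Hence $f(q)$ is well defined and satisfies \eqref{Slice}. I expect the main obstacle to be Step 2: checking carefully that an $SO(3)$ change of the triple $(I,J,IJ)$ again produces a triple of the form $(I',J',I'J')$ with $I'\perp J'$ in $\mathbb{S}_m$, and that the entries of $Q$, being real scalars, commute with $\varphi$ coordinatewise.
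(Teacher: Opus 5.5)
Your proposal is correct and is essentially the paper's argument: an element of $O^*(3)$ relating the frame $(I,J,IJ)$ to $(H,J_1,HJ_1)$ transports $x$ to the canonical point $(x_0,r,0,0)^T$, and Proposition~\ref{orthogonal} then reduces the value to $\mathcal F_0+H\mathcal F_1$. Like the paper, which hides this in ``it is easy to check'', you rely on $\mathrm{Span}_{\mathbb{R}}\{1,I,J,IJ\}$ being a quaternion algebra; the differences are minor: the paper avoids your $(b,K)\mapsto(-b,-K)$ ambiguity by fixing one common pair $(r,H)$ for both representations, while your reflection argument giving $\mathcal F_1(x_0,0,0,0)=0$ covers the degenerate case $b=0$, which the paper does not treat explicitly.
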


\begin{proof}
   We only need to prove that for each $x,y\in D$, $I,J,I',J'\in\mathbb{S}_m$ with $I\perp J$, $I'\perp J'$ and
   \begin{equation}\label{eq-vij}
       \varphi^{I,J}(x)=\varphi^{I',J'}(y),
   \end{equation}
   then
   \begin{equation}\label{eq-vijf}
       \varphi^{I,J}\circ\mathcal F(x)=\varphi^{I',J'}\circ\mathcal F(y).
   \end{equation}
   Write
   \begin{equation*}
        \mathcal{F}=\begin{pmatrix}
           \mathcal F_0\\\mathcal F_1\\\mathcal F_2\\\mathcal F_3
       \end{pmatrix}, \quad
       x=\begin{pmatrix}
           x_0\\x_1\\x_2\\x_3
      \end{pmatrix},\quad
      y=\begin{pmatrix}
           y_0\\y_1\\y_2\\y_3
      \end{pmatrix}
   \end{equation*}
   for some $\mathcal{F}_\ell:D\rightarrow\mathbb{R}_m^n,$ and $x_\ell,y_\ell\in\mathbb{R}^n, \ \ell=0,1,2,3$. Equation \eqref{eq-vij} implies that
   \begin{equation}\label{eq-xy}
       x_0=y_0\qquad\mbox{and}\qquad x_1I+x_2J+x_3IJ=y_1I'+y_2J'+y_3I'J'=rH,
   \end{equation}
   for some $r\in\mathbb{R}^n$ and $H\in\mathbb{S}_m$.
   It is easy to check that there is $J_1\in\mathbb{S}_m$ with $J_1\perp H$ and $g\in O^*(3)$ such that
   \begin{equation*}
       (1,I,J,IJ)g=(1,H,J_1,HJ_1).
   \end{equation*}
   Then
   \begin{equation*}
       \begin{split}
           (1,H,J_1,HJ_1)\begin{pmatrix}x_0\\r\\0\\0\end{pmatrix}= & x_0+rH=(1,I,J,IJ) x\\=&(1,I,J,IJ) g g^Tx=(1,H,J_1,HJ_1)g^Tx.
       \end{split}
   \end{equation*}
   According to $1,H,J_1,HJ_1$ are pairwise orthogonal,
   \begin{equation}\label{eq-gtx}
       g^T x=\begin{pmatrix}x_0\\r\\0\\0\end{pmatrix}=:v.
   \end{equation}
   By Proposition \ref{orthogonal}, $\mathcal{F}_2(v)=\mathcal{F}_3(v)=0$. Then
   \begin{equation*}
       \begin{split}
           \varphi^{I,J}\circ\mathcal F(x)=&(1,I,J,IJ)\mathcal{F}(gg^{T}x)=(1,I,J,IJ)g\mathcal F(g^{T}x)\\=&(1,H,J_1,HJ_1)\mathcal F(v)=\mathcal{F}_0(v)+H\mathcal{F}_1(v).
       \end{split}
   \end{equation*}
   Similarly,
   \begin{equation*}
       \varphi^{I',J'}\circ\mathcal{F}(y)=\mathcal{F}_0(v)+H\mathcal{F}_1(v).
   \end{equation*}
   Therefore, \eqref{eq-vijf} holds.
\end{proof}

\begin{remark}
In the case of $n=1,$ the above mapping $f$ reduces to an $O(3)$-slice function in one Clifford variable, similar to the case of \cite{jin2020slice1} in one octonionic vairable.
\end{remark}

Denote
$$\mathcal{S}_{O(3)}^1([D],\mathbb{R}_m^n):=\left\{
     f=\mathcal{I}_{O(3)}(\mathcal F):[D]\rightarrow \mathbb{R}_m^n\ \text{with}\ \mathcal{F}\in C^1(D)
     \right \}.$$

For a $O(3)$-slice mapping  $f$, we have the following representation formula: 
\begin{proposition}
    Let $f\in \mathcal{S}_{O(3)}([D],\mathbb{R}_m^n),$ let $ I,J,I',J'\in \mathbb{S}_m$ with $I\perp J,I'\perp J',$ then 
    \begin{equation}\label{representation}
        f\circ \varphi^{I',J'}=\varphi^{I',J'}\circ \left(\frac{1}{4}\left( \begin{array}{cccc}
    1&1&1&1\\
   -I&-I&I&I\\
     -J&J&-J&J\\
    -IJ&IJ&IJ&-IJ
    \end{array}\right)
    \left( \begin{array}{cccc}
    f\circ \varphi^{I,J}\\
   f\circ \varphi^{I,-J}\\
     f\circ  \varphi^{-I,J}\\
    f\circ \varphi^{-I,-J}
    \end{array}\right)
    \right).
    \end{equation}
\end{proposition}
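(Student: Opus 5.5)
The idea is to express each of the four values $f\circ\varphi^{\pm I,\pm J}(x)$ through the components of the stem mapping $\mathcal F$, and then check that the $4\times 4$ matrix in \eqref{representation} exactly inverts this linear system, recovering $\mathcal F(x)$ itself. Once $\mathcal F(x)$ is recovered, applying $\varphi^{I',J'}$ and using \eqref{Slice} for the pair $(I',J')$ gives the left-hand side. All identities are understood pointwise at an arbitrary $x\in D$.

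First, fix $f=\mathcal I_{O(3)}(\mathcal F)$ with $\mathcal F=(\mathcal F_0,\mathcal F_1,\mathcal F_2,\mathcal F_3)^T$ an $O(3)$-stem mapping. Note that if $I\perp J$, then the pairs $(I,-J)$, $(-I,J)$ and $(-I,-J)$ also consist of orthogonal elements of $\mathbb{S}_m$. So \eqref{Slice} applies to each of them. Using $(-I)J=I(-J)=-IJ$ and $(-I)(-J)=IJ$, we get
\begin{align*}
a&:=f\circ\varphi^{I,J}(x)=\mathcal F_0+I\mathcal F_1+J\mathcal F_2+IJ\mathcal F_3,\\
b&:=f\circ\varphi^{I,-J}(x)=\mathcal F_0+I\mathcal F_1-J\mathcal F_2-IJ\mathcal F_3,\\
c&:=f\circ\varphi^{-I,J}(x)=\mathcal F_0-I\mathcal F_1+J\mathcal F_2-IJ\mathcal F_3,\\
d&:=f\circ\varphi^{-I,-J}(x)=\mathcal F_0-I\mathcal F_1-J\mathcal F_2+IJ\mathcal F_3,
\end{align*}
where every $\mathcal F_\ell$ is evaluated at $x$ and the Clifford units act by left multiplication, componentwise in $\mathbb{R}_m^n$. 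This step does not even need the $O(3)$-equivariance; only the existence of $\mathcal F$ is used.

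Next, compute the rows of the matrix product in \eqref{representation}.
\begin{itemize}
\item Row 1 gives $\tfrac14(a+b+c+d)=\mathcal F_0$.
\item Row 2 gives $\tfrac14\bigl(-I(a+b)+I(c+d)\bigr)$. Here $a+b=2\mathcal F_0+2I\mathcal F_1$ and $c+d=2\mathcal F_0-2I\mathcal F_1$, so the expression equals $-\tfrac14\cdot 4I^2\mathcal F_1=\mathcal F_1$.
\item Row 3 gives $\tfrac14 J(-a+b-c+d)$. In $-a+b-c+d$ all terms cancel except $-4J\mathcal F_2$, so the expression equals $-J^2\mathcal F_2=\mathcal F_2$.
\item Row 4 gives $-\tfrac14 IJ(a-b-c+d)$. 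In $a-b-c+d$ only $4IJ\mathcal F_3$ survives, so the expression equals $-(IJ)^2\mathcal F_3$.
\end{itemize}
For row 4 to return $\mathcal F_3$, we need $(IJ)^2=-1$. This follows from $I^2=J^2=-1$ together with $IJ=-JI$, since then $IJIJ=-I^2J^2=-1$.

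I expect the main (if modest) obstacle to be exactly this anticommutation $IJ=-JI$ for $I,J\in\mathbb{S}_m$ with $I\perp J$. It is implicitly used throughout the paper, for instance in $1,H,J_1,HJ_1$ being pairwise orthogonal. I would justify it as part of the meaning of orthogonality of square roots of $-1$, namely that $IJ+JI=0$, which is the standard convention making $\mathrm{Span}_{\mathbb R}\{1,I,J,IJ\}$ a copy of $\mathbb H$.

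With the inversion established, the matrix expression in \eqref{representation} equals $\mathcal F(x)$. Hence the right-hand side of \eqref{representation} is $\varphi^{I',J'}\circ\mathcal F(x)$, which by \eqref{Slice} for $(I',J')$ equals $f\circ\varphi^{I',J'}(x)$. This proves the formula.
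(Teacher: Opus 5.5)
Your proposal is correct and follows the paper's proof: write $f\circ\varphi^{\pm I,\pm J}$ as a $4\times 4$ linear system in $\mathcal F_0,\dots,\mathcal F_3$, recover $\mathcal F$ using the displayed matrix, and then apply $\varphi^{I',J'}$ via the slice relation for $(I',J')$. Your row-by-row check, and your point that it needs $IJ=-JI$ (so $(IJ)^2=-1$) as the meaning of $I\perp J$, make explicit what the paper leaves implicit when it writes down the inverse; that reading is genuinely necessary, since $e_1$ and $e_2e_3$ are Euclidean-orthogonal elements of $\mathbb{S}_3$ that commute.
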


\begin{proof}
   Let $ I,J,I',J'\in \mathbb{S}_m$ with $I\perp J$ and $I'\perp J',$   for each $x\in D$, we have $$\varphi^{I,J}(x),\ \varphi^{I',J'}(x)\in [D].$$ Because $D$ is $O(3)$-invariant,
    then $$\varphi^{I,-J}(x),\varphi^{-I,J}(x),\varphi^{-I,-J}(x)\in [D].$$
    Since
    $f\in \mathcal{S}_{O(3)}([D],\mathbb{R}_m^n),$ by equation (\ref{Slice}),
\begin{equation*}
    \left( \begin{array}{cccc}
     f\circ \varphi^{I,J}\\
   f\circ \varphi^{I,-J}\\
     f\circ \varphi^{-I,J}\\
     f\circ \varphi^{-I,-J}
    \end{array}\right)=  \left( \begin{array}{cccc}
    1&I&J&IJ\\
   1&I&-J&-IJ\\
     1&-I&J&-IJ\\
   1&-I&-J&IJ
    \end{array}\right) \left( \begin{array}{cccc}
    \mathcal{F}_0 \\
   \mathcal{F}_1 \\
     \mathcal{F}_2 \\
    \mathcal{F}_3 
    \end{array}\right).
\end{equation*}
We can obtain the solution of the $O(3)$-stem mapping $\mathcal F$:
 \begin{equation*}
  \left( \begin{array}{cccc}
    \mathcal{F}_0 \\
   \mathcal{F}_1 \\
     \mathcal{F}_2 \\
    \mathcal{F}_3 
    \end{array}\right)=   \left( \begin{array}{cccc}
    1&I&J&IJ\\
   1&I&-J&-IJ\\
     1&-I&J&-IJ\\
   1&-I&-J&IJ
    \end{array}\right) ^{-1}  \left( \begin{array}{cccc}
     f\circ \varphi^{I,J}\\
   f\circ \varphi^{I,-J}\\
     f\circ \varphi^{-I,J}\\
     f\circ \varphi^{-I,-J}
    \end{array}\right)
 \end{equation*}

 \begin{equation}\label{Stem-presentaton}
 = \frac{1}{4}\left( \begin{array}{cccc}
    1&1&1&1\\
   -I&-I&I&I\\
     -J&J&-J&J\\
    -IJ&IJ&IJ&-IJ
    \end{array}\right)
    \left( \begin{array}{cccc}
     f\circ \varphi^{I,J}\\
   f\circ \varphi^{I,-J}\\
     f\circ \varphi^{-I,J}\\
     f\circ \varphi^{-I,-J}
    \end{array}\right).    
 \end{equation}
 On the other hand, according to equation (\ref{Slice}),  we get
 \begin{equation}\label{repre}
f\circ \varphi^{I',J'}= \varphi^{I',J'} \circ \mathcal{F}=\varphi^{I',J'} \circ \left( \begin{array}{cccc}
    \mathcal{F}_0 \\
   \mathcal{F}_1 \\
     \mathcal{F}_2 \\
    \mathcal{F}_3 
    \end{array}\right).
 \end{equation}
 Using (\ref{Stem-presentaton}) and (\ref{repre}), we obtain 
 (\ref{representation}) as desired.
\end{proof}

In analogy with the case of one variable function, we give the following definition:  
\begin{defn}
    Let $f\in \mathcal{S}_{O(3)}^1([D],\mathbb{R}_m^n)$ and its $O(3)$-stem mapping $\mathcal{F}=\begin{pmatrix}\mathcal{F}_0\\\mathcal{F}_1\\\mathcal{F}_2\\\mathcal{F}_3\end{pmatrix}.$
    If for each $x=\begin{pmatrix}x_0\\x_1\\x_2\\x_3\end{pmatrix}\in D,$
\renewcommand{\arraystretch}{1.5}

\begin{equation} \label{C-R}
   \left\{\begin{array}{cc}
       \frac{\partial\mathcal{F}_0}{\partial x_0}- \frac{\partial\mathcal{F}_1}{\partial x_1}-\frac{\partial\mathcal{F}_2}{\partial x_2}-\frac{\partial\mathcal{F}_3}{\partial x_3}=0 \\
       \frac{\partial\mathcal{F}_0}{\partial x_1}+ \frac{\partial\mathcal{F}_1}{\partial x_0}-\frac{\partial\mathcal{F}_2}{\partial x_3}+\frac{\partial\mathcal{F}_3}{\partial x_2}=0\\
        \frac{\partial\mathcal{F}_0}{\partial x_2}+ \frac{\partial\mathcal{F}_1}{\partial x_3}+\frac{\partial\mathcal{F}_2}{\partial x_0}-\frac{\partial\mathcal{F}_3}{\partial x_1}=0\\
        \frac{\partial\mathcal{F}_0}{\partial x_3}-\frac{\partial\mathcal{F}_1}{\partial x_2}+\frac{\partial\mathcal{F}_2}{\partial x_1}+\frac{\partial\mathcal{F}_3}{\partial x_0}=0
    \end{array}\right .
\end{equation}
    Then $f$ is called a slice Dirac-regular mapping on $[D]$, and denotes that $f\in \mathcal{SR}_{O(3)}([D]).$
\end{defn}
In the case of $n=1,$ we know that $f$ is a slice Dirac-regular function, similar to the case of octonionic in \cite{jin2020slice1}. 
Moreover, using the property of the $O(3)$-stem mappings, we obtain the equivalent condition for slice Dirac-regular mappings. 
\begin{proposition}\label{D-generalize}
   Let a mapping $f\in \mathcal{S}_{O(3)}^1([D],\mathbb{R}_m^n)$. Then $f\in \mathcal{SR}_{O(3)}([D])$ if and only if  there exist $I,J\in \mathbb{S}_m$ with $I\perp J$ such that
    for each $ x=\begin{pmatrix}x_0\\x_1\\x_2\\x_3\end{pmatrix}\in D,$  
   $$\left(\frac{\partial}{\partial x_0}+I \frac{\partial}{\partial x_1}+J\frac{\partial}{\partial x_2}+IJ\frac{\partial}{\partial x_3}\right)\left(f\circ \varphi^{I,J}(x)\right)=0.$$  
\end{proposition}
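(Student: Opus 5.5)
The plan is to compute the left-hand side directly in terms of the stem mapping $\mathcal F$. I will show that the slice Dirac operator applied to $f\circ\varphi^{I,J}$ equals $\varphi^{I,J}$ applied to the vector of the four left-hand sides of \eqref{C-R}. Throughout, write
\begin{equation*}
\mathcal D_{I,J}:=\frac{\partial}{\partial x_0}+I \frac{\partial}{\partial x_1}+J\frac{\partial}{\partial x_2}+IJ\frac{\partial}{\partial x_3},
\end{equation*}
and let $G(x)=(G_0,G_1,G_2,G_3)^T$ be the column of the four left-hand sides of \eqref{C-R}.

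\textbf{Step 1 (the identity).} By \eqref{Slice}, $f\circ\varphi^{I,J}=\mathcal F_0+I\mathcal F_1+J\mathcal F_2+IJ\mathcal F_3$, with left multiplication by the units. Since $I,J$ are constants, apply $\mathcal D_{I,J}$ termwise. Then reduce the products of units using $I^2=J^2=(IJ)^2=-1$ and $JI=-IJ$, which give
\begin{equation*}
I\cdot IJ=-J,\quad IJ\cdot I=J,\quad J\cdot IJ=I,\quad IJ\cdot J=-I.
\end{equation*}
Collecting the coefficients of $1,I,J,IJ$ should give exactly the four rows of \eqref{C-R}. For example, the coefficient of $I$ is
\begin{equation*}
\partial_{x_1}\mathcal F_0+\partial_{x_0}\mathcal F_1+\partial_{x_2}\mathcal F_3-\partial_{x_3}\mathcal F_2 .
\end{equation*}
Hence
\begin{equation*}
\mathcal D_{I,J}\bigl(f\circ\varphi^{I,J}\bigr)(x)=\varphi^{I,J}\bigl(G(x)\bigr)\qquad\text{for all } I\perp J .
\end{equation*}
This gives the direction ``only if'' immediately, since $G\equiv 0$ forces the left-hand side to vanish for every admissible pair $I,J$.

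\textbf{Step 2 (converse, the main obstacle).} Now assume $\varphi^{I,J}(G(x))=0$ for one fixed pair $I\perp J$ and all $x\in D$. The difficulty is that the entries $G_\ell$ are $\mathbb{R}_m^n$-valued, so $G_0+IG_1+JG_2+IJG_3=0$ alone does not force each $G_\ell=0$. To overcome this I would use the $O(3)$-invariance of $D$ and the stem property.

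First I will check that $G$ is itself $O(3)$-equivariant for the diagonal sign matrices
\begin{equation*}
g=\operatorname{diag}(1,1,-1,-1),\quad \operatorname{diag}(1,-1,1,-1),\quad \operatorname{diag}(1,-1,-1,1)
\end{equation*}
(embedded in $O^*(3)$), in the sense that $G(gx)=gG(x)$. To see this, differentiate $\mathcal F(gx)=g\mathcal F(x)$ by the chain rule: each variable $x_\ell$ and the component $\mathcal F_\ell$ carry the same sign $\epsilon_\ell$. Then check row by row that each term of each row of \eqref{C-R} picks up the sign of that row's index. For instance, every term of row $1$ involves index pairs $(0,1)$ or $(2,3)$, so it gets the factor $\epsilon_0\epsilon_1=\epsilon_2\epsilon_3$.

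Applying the hypothesis at the point $gx\in D$ then gives $\varphi^{I,J}(gG(x))=0$. Since $\varphi^{I,J}g$ is $\varphi^{I,-J}$, $\varphi^{-I,J}$ or $\varphi^{-I,-J}$ respectively, we obtain
\begin{equation*}
\begin{pmatrix}1&I&J&IJ\\1&I&-J&-IJ\\1&-I&J&-IJ\\1&-I&-J&IJ\end{pmatrix}G(x)=0 .
\end{equation*}
This is the same matrix that appears in the proof of the representation formula \eqref{representation}. Multiplying on the left by its explicit inverse from \eqref{Stem-presentaton} yields $G(x)=0$, which is exactly \eqref{C-R}.

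The one point to double-check is the sign bookkeeping in the equivariance of $G$; everything else is the routine algebra of Step 1.
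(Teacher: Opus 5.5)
Your proposal is correct and follows the paper's route. The identity $\mathcal D_{I,J}(f\circ\varphi^{I,J})=\varphi^{I,J}(G)$ gives the forward direction. For the converse you evaluate the hypothesis at $gx$ with $g=\operatorname{diag}(1,\epsilon_1,\epsilon_2,\epsilon_3)$, $\epsilon_1\epsilon_2\epsilon_3=1$, and invert the $4\times 4$ matrix, as the paper does. Your chain-rule check that $G(gx)=gG(x)$ supplies the justification the paper's ``it is obvious'' step leaves implicit, and it makes the paper's detour through $v=(x_0,r,0,0)$ and the coefficients $[x_\ell;r]$ unnecessary.
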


\begin{proof}
Let $f\in \mathcal{S}_{O(3)}^1([D],\mathbb{R}_m^n),$ there is an $O(3)$-stem mapping $\mathcal F=\begin{pmatrix}\mathcal{F}_0\\\mathcal{F}_1\\\mathcal{F}_2\\\mathcal{F}_3\end{pmatrix}$ such that $f=\mathcal{I}_{O(3)}(\mathcal F)$. For any $x=\begin{pmatrix}
    x_0\\x_1\\x_2\\x_3
\end{pmatrix}\in D$ and  $I,J\in \mathbb{S}_m$ with $I\perp J,$ we have 
$$f\circ \varphi^{I,J}(x)=\varphi^{I,J}\circ \mathcal{F}(x)=\mathcal{F}_0(x)+I\mathcal{F}_1(x) +J\mathcal{F}_2(x) +IJ\mathcal{F}_3(x). $$
Let  $\mathbb{I}:=(1,I,J,IJ)$ and slice Dirac operator $$D_{\mathbb{I}}:=\partial x_0+I \partial x_1+J\partial x_2+IJ\partial x_3,$$
here 
$$\partial x_{\ell}:=\frac{\partial}{\partial x_{\ell}}\quad \text{for}\ \ell=0,1,2,3.$$
Then
\begin{equation}\label{Dirac operator}
    \begin{split}
        &D_{\mathbb{I}}(f\circ \varphi^{I,J}(x))\\
        =&\left(\partial x_0+I \partial x_1+J\partial x_2+IJ\partial x_3\right) \left ( \mathcal{F}_0(x)+I\mathcal{F}_1(x) +J\mathcal{F}_2(x) +IJ\mathcal{F}_3(x) \right )\\
       =&\left \{\left(\partial x_0,- \partial x_1,-\partial x_2,-\partial x_3\right)+I\left(\partial x_1,\partial x_0,-\partial x_3,\partial x_2\right)\right.
       \\
       &+\left. J\left(\partial x_2,\partial x_3,\partial x_0,- \partial x_1\right)+IJ\left(\partial x_3,-\partial x_2,\partial x_1,\partial x_0\right)\right\} \begin{pmatrix}
           \mathcal{F}_0(x)\\ \mathcal{F}_1(x)\\ \mathcal{F}_2(x)\\ \mathcal{F}_3(x)
       \end{pmatrix}\\
       =&( \nabla g_1^T+I  \nabla g_2^T+J \nabla g_3^T+IJ \nabla g_4 ^T ) \mathcal{F}(x)\\
       =& \nabla g_1^T\mathcal{F}(x)+I  \nabla g_2^T\mathcal{F}(x)+J \nabla g_3^T\mathcal{F}(x)+IJ \nabla g_4 ^T\mathcal{F}(x).
  \end{split}
\end{equation}
If $f\in \mathcal{SR}_{O(3)}([D]),$ then equation $(\ref{C-R})$ holds, i.e., $$\nabla g_\ell^T\mathcal{F}(x)=0,\quad \forall x\in D\ \text{and}\ \ell=1,2,3,4.$$
 Thus $D_{\mathbb{I}}(f\circ \varphi^{I,J}(x))=0$ on $D$ due to equation $(\ref{Dirac operator}).$

Conversely, if $D_{\mathbb{I}}(f\circ \varphi^{I,J}(x))=0$ on $D,$ the following proves that $\nabla g_\ell^T\mathcal{F}(x)=0$ for $\ell=1,2,3,4$. In fact, let $ x\in D,$ 
there is $r\in \mathbb{R}^n$ and $H\in \mathbb{S}_m$ such that $\varphi^{I,J}(x)=x_0+Hr$. 
It is easy to check that there is $$g=\begin{pmatrix}
   \mathbb{I}_{n\times n}\\ & \lambda_1 \mathbb{I}_{n\times n} &* & * \\ & \lambda_2 \mathbb{I}_{n\times n} & * & *\\ & \lambda_3 \mathbb{I}_{n\times n} & * & *\end{pmatrix}\in O^*(3)$$ with $\lambda_\ell\in\mathbb{R}, \ell =1,2,3$ such that \eqref{eq-gtx} holds, i.e.
\begin{equation*}
   \begin{pmatrix}
       x_0\\x_1\\x_2\\x_3
   \end{pmatrix}=g\begin{pmatrix} x_0\\r\\0\\0\end{pmatrix}=
   \begin{pmatrix}
   \mathbb{I}_{n\times n}\\ & \lambda_1 \mathbb{I}_{n\times n} &* & * \\ & \lambda_2 \mathbb{I}_{n\times n} & * & *\\ & \lambda_3 \mathbb{I}_{n\times n} & * & *\end{pmatrix}
   \begin{pmatrix} x_0\\r\\0\\0\end{pmatrix}.
\end{equation*}
Then
\begin{equation*}
    x_\ell=\lambda_\ell r,\qquad\ell=1,2,3.
\end{equation*}
Denote
\begin{equation*}
    [x_\ell;r]:=\lambda_\ell
\end{equation*}
and $v:=\begin{pmatrix}x_0\\r\\0\\0\end{pmatrix}.$ 
Let 
$$g_1=\begin{pmatrix}
    \mathbb{I}_{n\times n}\\ & -\mathbb{I}_{n\times n}\\ & & -\mathbb{I}_{n\times n}\\ & & &-\mathbb{I}_{n\times n}
    \end{pmatrix}$$
and    
$$g_2=\begin{pmatrix}
    &\mathbb{I}_{n\times n}&&\\
    \mathbb{I}_{n\times n}\\ 
    & & &-\mathbb{I}_{n\times n}\\ 
    & & \mathbb{I}_{n\times n}
    \end{pmatrix},$$
    
$$g_3=\begin{pmatrix}
    &&\mathbb{I}_{n\times n}&\\
    &&&\mathbb{I}_{n\times n}\\ 
   \mathbb{I}_{n\times n}&&&\\ 
   & -\mathbb{I}_{n\times n}&&
    \end{pmatrix}$$
and
$$g_4=\begin{pmatrix}
    &&&\mathbb{I}_{n\times n}\\
    &&-\mathbb{I}_{n\times n}&\\ 
   &\mathbb{I}_{n\times n}&&\\ 
   \mathbb{I}_{n\times n}&&&
    \end{pmatrix}.$$
Then $g_1,g_2,g_3,g_4$ are the orthogonal   matrices in $[\End(\mathbb{R}_m^n)]^{4\times 4}$.

Let $$\nabla=\left(\partial x_0,\partial x_1, \partial x_2,\partial x_3\right)\quad \text{and}\quad \nabla_1=\left(\partial x_0, \partial r,\ 0\ ,0\ \right).$$ 
By the choice of $g$, we get that $\nabla= \nabla_1 g^T$. 
Using the fact that $\mathcal{F}$ is an $O(3)$-stem mapping on $D,$ we have
\begin{equation} \label{2}
       \mathcal{F}(x)=\mathcal{F}(gv)=g\mathcal{F}(v).
\end{equation}
By the characteristic of $g$, 
\renewcommand{\arraystretch}{1.5}
  \begin{equation}  \label{3}
   g\mathcal{F}(v)= g\left( \begin{array}{cccc}
    \mathcal{F}_0(v)\\
    \mathcal{F}_1(v)\\
    \mathcal{F}_2(v)\\\mathcal{F}_3(v)
    \end{array}\right)\\
    = g\left( \begin{array}{cccc}
    \mathcal{F}_0(v)\\
    \mathcal{F}_1(v)\\
    0\\0
    \end{array}\right)
    =\begin{pmatrix}
   \mathcal{F}_0(v)\\
   [x_1;r] \mathcal{F}_1(v)\\
   [x_2;r] \mathcal{F}_1(v)\\
   [x_3;r] \mathcal{F}_1(v)
    \end{pmatrix}. 
\end{equation}
Here, $\mathcal{F}_2(v)$ and $\mathcal{F}_3(v)$ are equal to zero by Proposition \ref{orthogonal}. 
 Combining equation (\ref{2}) and equation (\ref{3}), we have
 \begin{equation*}
     \mathcal{F}(x) =\begin{pmatrix}
   \mathcal{F}_0(v)\\
   [x_1;r] \mathcal{F}_1(v)\\
   [x_2;r] \mathcal{F}_1(v)\\
   [x_3;r] \mathcal{F}_1(v)
    \end{pmatrix}.
 \end{equation*}
According to equation (\ref{Dirac operator}), we get
$$D_{\mathbb{I}}(f\circ \varphi^{I,J}(x))=( \nabla g_1^T +I  \nabla g_2^T +J \nabla g_3^T +IJ \nabla g_4 ^T)\begin{pmatrix}
   \mathcal{F}_0(v)\\
   [x_1;r] \mathcal{F}_1(v)\\
   [x_2;r] \mathcal{F}_1(v)\\
   [x_3;r] \mathcal{F}_1(v)
    \end{pmatrix}.$$
Note that
$\nabla g_{\ell}^T=\nabla_1g^T g_{\ell}^T=\nabla_1 (g_{\ell}g)^T$ for $\ell =1,2,3,4.$
Then

\begin{equation}\label{Star}
\begin{split}
    &D_{\mathbb{I}}(f\circ \varphi^{I,J}(x))\\
    &=\left(  \nabla_1 (g_1g)^T +I \nabla_1(g_2g)^T  +J\nabla_1 (g_3g)^T  +IJ\nabla_1  (g_4g)^T \right)\begin{pmatrix}
   \mathcal{F}_0(v)\\
   [x_1;r] \mathcal{F}_1(v)\\
   [x_2;r] \mathcal{F}_1 (v)\\
   [x_3;r] \mathcal{F}_1(v)
    \end{pmatrix}. 
\end{split}
\end{equation}
Since $D_{\mathbb{I}}(f\circ \varphi^{I,J}(x))=0$ on $D,$ we consider
$$x=\begin{pmatrix}x_0\\x_1\\x_2\\x_3\end{pmatrix},\begin{pmatrix}x_0\\-x_1\\-x_2\\x_3\end{pmatrix},\ \begin{pmatrix}x_0\\-x_1\\x_2\\-x_3\end{pmatrix}\ \text{and}\ \begin{pmatrix}x_0\\x_1\\-x_2\\-x_3\end{pmatrix}$$
 respectively, denote $$A:=\begin{pmatrix}
   \mathcal{F}_0(v)\\
   [x_1;r]\mathcal{F}_1(v)\\
   [x_2;r]\mathcal{F}_1 (v)\\
   [x_3;r]\mathcal{F}_1(v)
    \end{pmatrix}.$$
By equation $(\ref{Star}),$ it is obvious that
\begin{equation*}
    \left\{\begin{array}{cccc}
      \left(\nabla_1 (g_1g)^T +I \nabla_1(g_2g)^T  +J\nabla_1 (g_3g)^T  +IJ\nabla_1  (g_4g)^T\right)
      A=0   \\
      \left(\nabla_1 (g_1g)^T -I \nabla_1(g_2g)^T  -J\nabla_1 (g_3g)^T  +IJ\nabla_1  (g_4g)^T\right)
      A=0 \\
      \left ( \nabla_1 (g_1g)^T -I \nabla_1(g_2g)^T  +J\nabla_1 (g_3g)^T  -IJ\nabla_1  (g_4g)^T\right)
      A=0 \\
      \left (\nabla_1 (g_1g)^T +I \nabla_1(g_2g)^T  -J\nabla_1 (g_3g)^T -IJ\nabla_1  (g_4g)^T\right)
      A=0 
      \end{array}\right.
\end{equation*}
Hence
\begin{equation*}
   \nabla_1 (g_{\ell}g)^TA=0\ \text{for}\ \ell=1,2,3,4.
\end{equation*}
That is,
\begin{equation*}
 \nabla g_{\ell}^T\mathcal{F}(x)=0\ \text{for}\ \ell=1,2,3,4.   
\end{equation*}
Therefore, equation $(\ref{C-R})$ holds, and then $f$ is a slice Dirac mapping on $[D]$.
\end{proof}

Based on the  characteristic of the $O(3)$-stem mappings, we derive that if an $O(3)$-slice mapping $f$ preserves one slice, then the maximum and minimum of $|f|$ can be obtained on this preserving slice.

\begin{thm}\label{extreme value}
	Let $f\in \mathcal{S}_{O(3)}\left([D],\mathbb{R}_m^n\right)$ and $f\left([D]_{I^{'}}\right)\subset \mathbb{C}_{I^{'}}^n$ for some ${I^{'}}\in \mathbb{S}_m.$ Let $ I,J\in \mathbb{S}_m$ with $I\perp J.$  For all $x=\begin{pmatrix}
	    x_0\\x_1\\x_2\\x_3
	\end{pmatrix}\in D,$ we have
    
\begin{equation*} 
    \max_{x\in D}|f\circ \varphi^{I,J}(x)|=\max\{ |f(x_0+r{I^{'}})|, |f(x_0-r{I^{'}})| \},
\end{equation*}

\begin{equation*} 
   \min_{x \in D}|f\circ \varphi^{I,J}(x)|=\min\{ |f(x_0+r {I^{'}})|, |f(x_0-r{I^{'}})| \},
\end{equation*}
where $r\in \mathbb{R}^n$ such that $\varphi ^{I,J}(x)=x_0+rH$ for some $H\in \mathbb{S}_m.$
\end{thm}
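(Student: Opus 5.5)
The plan is to reduce everything to the stem data at the "standard position" $v=(x_0,r,0,0)^T$ and then to a one-parameter extremal problem in the imaginary unit $H$. I read the statement pointwise in $(x_0,r)$. For each $x\in D$, as in the proof of uniqueness of $\mathcal{I}_{O(3)}(\mathcal F)$, choose $H\in\mathbb{S}_m$ and $r\in\mathbb{R}^n$ with $\varphi^{I,J}(x)=x_0+rH$. Then, by \eqref{eq-gtx}, Proposition \ref{orthogonal} and \eqref{Slice},
\begin{equation*}
f\circ\varphi^{I,J}(x)=\mathcal F_0(v)+H\mathcal F_1(v),\qquad v=\begin{pmatrix}x_0\\r\\0\\0\end{pmatrix}.
\end{equation*}
The same formula with $H$ replaced by $\pm I'$ gives $f(x_0\pm rI')=\mathcal F_0(v)\pm I'\mathcal F_1(v)$, because $\pm I'$ can be completed to orthonormal pairs $(\pm I',J')$. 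The claim to prove is therefore
\begin{equation*}
\min\{|f(x_0+rI')|,|f(x_0-rI')|\}\le |\mathcal F_0(v)+H\mathcal F_1(v)|\le \max\{|f(x_0+rI')|,|f(x_0-rI')|\},
\end{equation*}
with equality attained at $H=\pm I'$.

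\textbf{Step 1 (stem values lie in the preserved slice).} Set $a=\mathcal F_0(v)$ and $b=\mathcal F_1(v)$. Since $x_0\pm rI'\in[D]_{I'}$, the hypothesis gives $a\pm I'b\in\mathbb{C}_{I'}^n$. Adding and subtracting shows $a\in\mathbb{C}_{I'}^n$ and $I'b\in\mathbb{C}_{I'}^n$. Multiplying the latter by $-I'$ gives $b\in\mathbb{C}_{I'}^n$. Write $a=\alpha_0+I'\alpha_1$ and $b=\beta_0+I'\beta_1$ with $\alpha_\ell,\beta_\ell\in\mathbb{R}^n$.

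\textbf{Step 2 (the squared norm is affine in $H$).} Expand
\begin{equation*}
|a+Hb|^2=|a|^2+|Hb|^2+2\langle a,Hb\rangle .
\end{equation*}
I would use $|Hb|=|b|$, i.e.\ that left multiplication by $H$ is an isometry on $\mathbb{C}_{I'}$-valued vectors. Granting this, the only $H$-dependence sits in the real-linear functional $H\mapsto\langle a,Hb\rangle$. Computing it componentwise with $a,b\in\mathbb{C}_{I'}^n$, one gets $\langle a,Hb\rangle=\langle H,c\rangle$ with
\begin{equation*}
c=\sum_k a_k\overline{b_k}\in\mathbb{C}_{I'},
\end{equation*}
up to conjugation conventions. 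Since $H$ is purely imaginary, only the $I'$-component of $c$ contributes. Hence
\begin{equation*}
|a+Hb|^2=|a|^2+|b|^2+2\kappa\,\langle H,I'\rangle,\qquad \kappa\in\mathbb{R}.
\end{equation*}

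\textbf{Step 3 (conclusion).} Because $|H|=|I'|=1$, we have $\langle H,I'\rangle\in[-1,1]$. The right-hand side above is monotone in this parameter, so it lies between its values at $H=I'$ and $H=-I'$, namely $|f(x_0+rI')|^2$ and $|f(x_0-rI')|^2$. Taking square roots yields both the max and the min identities, and the extremes are attained on the preserved slice.

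The main obstacle is Step 2: in a general Clifford algebra the norm is not multiplicative, and not every $J^2=-1$ element acts isometrically. I would first try to verify $|Hb|=|b|$ and the reduction $\langle a,Hb\rangle=\langle H,c\rangle$ directly from $b\in\mathbb{C}_{I'}^n$, using the relation $\overline{H}=-H$ together with $H^2=-1$. If this fails for exotic square roots of $-1$, I would restrict to units $H$ for which $x\mapsto Hx$ is orthogonal (for example paravector units), which is the setting in which $\mathbb{B}$ is a genuine ball anyway.
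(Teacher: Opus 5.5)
Your proposal is the paper's proof: the same reduction to $v=(x_0,r,0,0)^T$, the same identities $f(x_0\pm rI')=\mathcal F_0(v)\pm I'\mathcal F_1(v)$ forcing $\mathcal F_0(v),\mathcal F_1(v)\in\mathbb{C}_{I'}^n$, and your Steps 2--3 correctly supply the extremal step that the paper only cites from Ren--Yang, since for $\overline{H}=-H$ and $\overline{I'}=-I'$ one has $|Hb|=|b|$ and $\langle a,Hb\rangle=\langle \sum_k a_k\overline{b_k},H\rangle$, so $|a+Hb|^2$ is affine in $\langle H,I'\rangle$. Your caveat is a genuine restriction on the statement, not a weakness of your method: for $H^2=-1$ the condition $\overline{H}=-H$ is equivalent to $|H|=1$, and for a non-unit root such as $H=e_{12}+t(e_2+e_{134})\in\mathbb{R}_4$ with $t\neq 0$ (so $|H|^2=1+2t^2$, and $J=e_{23}$ is orthogonal to $H$ and anticommutes with it), already $f(q)=q$ with $I'=e_1$ gives $|x_0+rH|>|x_0\pm rI'|$ whenever $r\neq 0$.
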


\begin{proof}
   Suppose $f\in \mathcal{S}_{O(3)}\left([D],\mathbb{R}_m^n\right),$ there exists an $O(3)$-stem mapping $\mathcal{F}=\begin{pmatrix}\mathcal{F}_0\\\mathcal{F}_1\\\mathcal{F}_2\\\mathcal{F}_3\end{pmatrix}$ such that $f=\mathcal{I}(\mathcal F).$ Let $x=\begin{pmatrix}
	    x_0\\x_1\\x_2\\x_3
	\end{pmatrix}\in D$ with $\varphi^{I,J}(x)=x_0+rH.$ 
   Let $v=\begin{pmatrix}
	  x_0\\r\\0\\0   
	\end{pmatrix},$ let $ J_1\in \mathbb{S}_m$ with $J_1\perp H,$ then
    $\varphi^{I,J}(x)=x_0+rH=\varphi ^{H,J_1}(v).$
    Since $f\in \mathcal{S}_{O(3)}\left([D],\mathbb{R}_m^n\right),$
   according to equation (\ref{Slice}), 
 \begin{equation*}
  f\circ \varphi^{I,J}(x)=f\circ \varphi^{H,J_1}(v)=\varphi^{H,J_1}\circ \mathcal{F}(v).
\end{equation*}
According to Proposition \ref{orthogonal}, we get
   $\mathcal F(v)=\begin{pmatrix}
       \mathcal F_0(v)\\ 
       \ \mathcal F_1(v)\\0\\0
   \end{pmatrix}.$
Hence 
\begin{equation}\label{def}
 f\circ \varphi^{I,J}(x)=f(x_0+rH)= \mathcal F_0(v)+H \mathcal F_1(v).   
\end{equation}

Notice that $x_0+rI',x_0-rI'\in [D],$ 
by equation (\ref{def}),
\begin{equation*}
  f(x_0+r{I^{'}})=\mathcal F_{0}(v)+{I^{'}}\mathcal F_{1}(v),
\end{equation*}
\begin{equation*}
  f(x_0-r{I^{'}})=\mathcal F_{0}(v)-{I^{'}}\mathcal F_{1}(v).
\end{equation*}
  It follows that 
\begin{equation*}
  \mathcal F_{0}(v)=\frac{1}{2}\left[ f(x_0+r{I^{'}})+f(x_0-r{I^{'}})\right].
\end{equation*}
\begin{equation*}
  \mathcal F_{1}(v)=\frac{{I^{'}}}{2}\left[ f(x_0-r{I^{'}})-f(x_0+r{I^{'}})\right].
\end{equation*}
By assumption, $f$ preserves a slice on $\mathbb{C}_{I^{'}}^n,$  so 
  $$\mathcal F_{0}(v),\ \mathcal F_{1}    (v)\in \mathbb{C}_{I^{'}}^n.$$
  Combining the formula \eqref{def},
  $| f\circ \varphi^{I,J}(x)|$ has the extreme value in $\mathbb{C}_{I^{'}}^n,$\ (see details in \cite{ren2019growth3}.)
\end{proof}

\section{Growth Theorem In The Starlike OR Convex Domains}
As reference \cite{Dou202000426} pointed out, the coverage domain of regular power series is a $\sigma$-ball, which is generally not open under Euclidean topology. To adapt to the structure of slice analysis, we need use the following slice topology.
\begin{defn}
    We define the slice topology on $(\mathbb{R}_m)^n_s:$
    $$ \tau_s((\mathbb{R}_m)^n_s):=\left\{U\subset (\mathbb{R}_m)^n_s:U_I\in \tau (\mathbb{C}^n_I)\ \text{for any}\ I\in \mathbb{S}_m\right\},$$
    where $U_I:=U\cap\mathbb{C}^n_I$ and $\tau (\mathbb{C}^n_I)$ is the Euclidean topology on $\mathbb{C}^n_I.$
\end{defn}
Now we can generalize the growth theorems for subclasses of slice Dirac-regular mappings over Clifford algebras. 
\begin{defn}
    A set $[D]\in (\mathbb{R}_m)_s^n$ is called starlike, if  zero belongs to $[D]$ and the closed line segment joins zero to each point $q\in [D]$ lines entirely in $[D].$
\end{defn}
\begin{defn}
    A set $[D]\in (\mathbb{R}_m)_s^n$ is called slice starlike, if $[D]_I$ is starlike in $\mathbb{C}_I^n$ for some $I\in \mathbb{S}_m.$ 
\end{defn}
\begin{remark}
    By the construction of the set $[D],$ we get that $ [D]$ is starlike if and only if $[D]$ is slice starlike. 
\end{remark}
\begin{defn}
    A set $[D]\in (\mathbb{R}_m)^n$ is called slice convex, if $[D]_I$ is convex in $\mathbb{C}_I^n$ for some $I\in \mathbb{S}_m,$ i.e., $[D]_I$ is starlike with respect to each of its points. 
\end{defn}
\begin{defn}
    A set $[D]\in (\mathbb{R}_m)_s^n$ is called slice circular, if for some $I\in \mathbb{S}_m$ and $z\in [D]_I, \theta \in \mathbb{R},$ we have $e^{I\theta}z\in [D]_I.$
\end{defn}

\begin{defn}
  Let $I\in \mathbb{S}_m$, a mapping $f:\mathbb{B}_I \rightarrow \mathbb{C}_I^n$ is called k-fold symmetric, if 
  $$ e^{-2\pi I/k}f(e^{2\pi I/k}q)=f(q)\quad \text{for  all}\  q\in \mathbb{B}_I,$$
  where $k$ is a positive integer.
\end{defn}

\begin{defn}
     A domain $[D]\in (\mathbb{R}_m)_s^n$ is called slice domain, if 
     
    $(i)\ [D]\cap \mathbb{R}^n\neq \emptyset;$
    
    $(ii) \ [D]_I\ \text{is a domain of}\ \mathbb{C}_I^n \ \text{for any}\  I\in \mathbb{S}_m.$ 
\end{defn}
By \cite{ren2019growth3}*{ Lemma 4.6}, under the slice topology, we obtain the analytic character of $[D]$ by defining functions.

\begin{proposition}
    A slice domain $[D]$ is bounded, starlike and slice circular if and only if there exists a unique continuous function
    $$\rho:(\mathbb{R}_m)_s^n \rightarrow \mathbb{R} $$
    called the defining function of $[D]$, such that 
    \renewcommand\arraystretch{1.5}
\begin{enumerate}
    \item
    $\rho(q)\geq 0$  for each $q \in [D]; \ \rho(q)=0$ if and only if $ q=0$.
    \item 
    $\rho(tq)=|t|\rho(q)\  \text{for each}\ J \in \mathbb{S}_m,\ q \in \mathbb{C}^n_J\ \text{and}\ t\in \mathbb{C}_J.$
    \item
    $(3)\ [D]=\left \{q\in (\mathbb{R}_m)_s^n:\ \rho(q)<1 \right \}.$
\end{enumerate}    
\end{proposition}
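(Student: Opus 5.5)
The plan is to define $\rho$ as the Minkowski functional of $[D]$,
\begin{equation*}
\rho(q):=\inf\{t>0:\ q\in t[D]\},\qquad q\in(\mathbb{R}_m)^n_s,
\end{equation*}
with $\rho(0)=0$. Every check then reduces to the slices $\mathbb{C}_J^n$, as in \cite{ren2019growth3}*{Lemma 4.6}. Since $q\in\mathbb{C}_J^n$ implies $tq\in\mathbb{C}_J^n$ for all real $t$, we have $\rho(q)=\rho_J(q)$, where $\rho_J$ is the Minkowski functional of $[D]_J$ in $\mathbb{C}_J^n\cong\mathbb{C}^n$. The work is therefore to show that each $[D]_J$ is a bounded, starlike, circular domain of $\mathbb{C}^n_J$. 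Then the classical complex fact applies: such a domain is $\{\rho_J<1\}$ for a continuous, absolutely homogeneous (over $\mathbb{C}_J$) function $\rho_J$.

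\textbf{The structure transfers to every slice.} The definitions give starlikeness and circularity on \emph{some} slice $I$. The key observation is that $[D]$ is built from the $O(3)$-invariant set $D$. Any $J\in\mathbb{S}_m$ can be completed to a pair $J\perp J'$, and $O^*(3)$ acts transitively on the relevant frames (as in the proof that $\mathcal{I}_{O(3)}$ is well defined). Hence the map $x_0+rI\mapsto x_0+rJ$, with $x_0,r\in\mathbb{R}^n$, is a real-linear isometric bijection $[D]_I\to[D]_J$. It intertwines multiplication by $e^{I\theta}$ with multiplication by $e^{J\theta}$. It therefore carries starlikeness, circularity, boundedness and openness from $[D]_I$ to $[D]_J$, and gives $\rho_J(x_0+rJ)=\rho_I(x_0+rI)$.

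\textbf{Verifying properties (1)--(3).} On each slice the standard argument works. Boundedness gives $\rho_J(q)>0$ for $q\neq 0$, and openness with $0\in[D]_J$ gives finiteness. Starlikeness gives $[D]_J=\{\rho_J<1\}$, using openness for the strict inequality. Circularity together with real homogeneity gives $\rho_J(tq)=|t|\rho_J(q)$ for $t\in\mathbb{C}_J$. Uniqueness follows from (2) and (3), since these force $\rho(q)=\inf\{s>0: q/s\in[D]\}$.

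\textbf{Continuity.} Continuity on each slice is classical: $\rho_J$ is upper semicontinuous because $[D]_J$ is open, and lower semicontinuous via boundedness and the homogeneity estimate. Globally on $(\mathbb{R}_m)^n_s$ I would work in the slice topology $\tau_s$, where continuity means exactly continuity of every restriction $\rho|_{\mathbb{C}_J^n}$. So the slicewise statement suffices. If Euclidean continuity is wanted instead, the uniform identity $\rho(x_0+rJ)=\rho_I(x_0+rI)$ expresses $\rho$ through the continuous data $(x_0,|r|\text{-direction})$, though care is needed near $\mathbb{R}^n$.

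\textbf{Converse and main obstacle.} If such a $\rho$ exists, then $[D]=\{\rho<1\}$ is starlike and circular by (2), bounded by continuity and positivity on the compact unit sphere of each slice, and open in each slice. The main obstacle I expect is the transfer step: checking carefully that the slice-wise isometry $[D]_I\to[D]_J$ really comes from $O(3)$-invariance, including points where $r$ is not of rank one in $\mathbb{R}^n$.
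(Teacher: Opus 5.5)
Your route matches what the paper intends. The paper gives no argument of its own; it cites Lemma 4.6 of \cite{ren2019growth3}, that is, the Minkowski functional taken slice by slice plus the classical defining-function lemma on each $\mathbb{C}^n_J$. Your explicit $O(3)$ transfer is a genuine addition and is needed, since slice circularity is assumed only on one slice. It is also sound. If $x_0+rI\in[D]$, the argument in the well-definedness proof, together with the reflection $\mathrm{diag}(1,-1,1,1)$, yields $(x_0,r,0,0)^T\in D$, so $x_0+rJ=\varphi^{J,J'}((x_0,r,0,0)^T)\in[D]$. Points of $D$ where $(x_1,x_2,x_3)$ is not of rank one never enter $[D]$, so that worry is moot. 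Defining $\rho$ globally as $\inf\{t>0:q\in t[D]\}$ avoids any compatibility check on $\mathbb{C}^n_I\cap\mathbb{C}^n_J$. Your treatment of (1)--(3) and of uniqueness is correct.

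The genuine gap is continuity. Upper semicontinuity is fine, because $\{\rho_J<c\}=c[D]_J$ is open. Lower semicontinuity does not follow from boundedness and homogeneity: boundedness only gives $\rho_J(q)\ge |q|/R$, which does not stop $\rho_J$ from jumping down. For example, take $\Omega=\{z\in\mathbb{C}^2:|z|<2\}\setminus\{(\lambda,0):1\le|\lambda|<2\}$. It is a bounded domain, starlike in the paper's segment sense, and circular. Its Minkowski functional is $|\lambda|$ at $(\lambda,0)$ but $|z|/2$ whenever $z_2\neq0$, so it is discontinuous at $(1,0)$.

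This example lives inside the paper's framework. Take $m=n=2$ and write $x_\ell=(x_\ell^{(1)},x_\ell^{(2)})\in\mathbb{R}^2$. From $\{x\in\mathrm{R}^{8}:|x|<2\}$, remove the relatively closed, $O^*(3)$-invariant set where $x_\ell^{(2)}=0$ for $\ell=0,\dots,3$ and $1\le\sum_\ell (x_\ell^{(1)})^2<4$. The resulting $D$ is open and $O(3)$-invariant, and $[D]$ is a bounded, starlike, slice circular slice domain. Each slice $[D]_J=\{x_0+rJ:(x_0,r,0,0)^T\in D\}$ is a copy of $\Omega$.

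Your own uniqueness argument shows that (2) and (3) force $\rho$ to be the Minkowski functional. Hence no continuous $\rho$ exists for this $[D]$, and the forward implication cannot be proved from the stated hypotheses. Citing Lemma 4.6 of \cite{ren2019growth3}, as the paper does, does not supply the missing ingredient under the paper's definition of starlike.

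You need one of two fixes:
\begin{enumerate}
\item Add a hypothesis such as $t\,\overline{[D]_J}\subset[D]_J$ for $0\le t<1$. Then the radial function $R(\xi)=\sup\{s>0:s\xi\in[D]_J\}$ is upper as well as lower semicontinuous, and $\rho_J(q)=|q|/R(q/|q|)$ is continuous.
\item Weaken ``continuous'' to ``upper semicontinuous''.
\end{enumerate}

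A smaller point concerns the converse. Compactness of the unit sphere of each slice bounds each $[D]_J$ separately. To bound $[D]$ itself, the bound must be uniform in $J$, and you should get this explicitly from your transfer identity $\rho(x_0+rJ)=\rho(x_0+rI)$.
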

    
\begin{remark}\label{defunction}
    Let $I,J,I',J'\in \mathbb{S}_m$ with $I\perp J$ and $I'\perp J'$, If a slice domain $[D]$ is bounded, starlike and slice circular,   then for any $x\in D,$ we have 
    $$\rho\circ \varphi^{I,J}(x)=\rho\circ \varphi^{I',J'}(x).$$
\end{remark}

\begin{proof}     
Let $x\in D,$  then $\varphi^{I,J}(x),\ \varphi^{I',J'}(x)\in [D].$ Thus there exist $r\in \mathbb{R}^n$ and $H_1,H_2 \in \mathbb{S}_m$ such that $$\varphi^{I,J}(x)=x_0+rH_1\quad \text{and}\quad \varphi^{I',J'}(x)=x_0+rH_2.$$
From the proof of Lemma 4.6 in \cite{ren2019growth3}, we can get  $\rho\circ \varphi^{I,J}(x)=\rho\circ \varphi^{I',J'}(x).$
\end{proof}
Finally, we derive the growth theorem for slice Dirac-regular mapping $f$ under some conditions. For convenience, let $f_I$ be the restriction of $f$ to $[D]_I$ for some $I\in \mathbb{S}_m$. 

\begin{thm}
   Let $[D]$ be a bounded,  starlike and slice circular slice domain in $\left(\mathbb{R}_m\right)_s^n,$ its defining function $\rho(x)\in C^1([D])$ except for a lower dimensional set. Let $f\in \mathcal{SR}_{O(3)}\left([D],\mathbb{R}_m^n\right)$, its restriction $f_{I'}$ is a starlike mapping in $\mathbb{C}_{I'}^n$  and $f\left([D]_{I'}]\right)\subset \mathbb{C}_{I'}^n$ for some $I'\in \mathbb{S}_m$. 
   If $f(0)=0,(f_{I'})'(0)=\id_{\mathbb{R}_m^n}$, then for any $x\in D$ and  $I,J\in \mathbb{S}_m$ with $I\perp J,$
   $$\frac{\rho\circ \varphi^{I,J}(x)}{\left(1+\rho\circ \varphi^{I,J}(x)\right)^2}\leq |f\circ \varphi^{I,J}(x)|\leq \frac{\rho\circ \varphi^{I,J}(x)}{\left(1-\rho\circ \varphi^{I,J}(x)\right)^2}$$
or equivalently,
   $$\frac{|\varphi^{I,J}(x)|}{\left(1+\rho\circ \varphi^{I,J}(x)\right)^2}\leq |f\circ \varphi^{I,J}(x)|\leq \frac{|\varphi^{I,J}(x)|}{\left(1-\rho\circ \varphi^{I,J}(x)\right)^2}.$$
These estimates are sharp.
\end{thm}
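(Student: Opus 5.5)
The plan is to reduce the estimate to the classical growth theorem for normalized biholomorphic starlike mappings on a bounded, starlike, circular domain of $\mathbb{C}^n$. That theorem is applied on the preserved slice $\mathbb{C}_{I'}^n$, and the bound is then transported to an arbitrary quaternionic-type slice $\varphi^{I,J}(D)$ by the $O(3)$-stem structure.

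\textbf{Step 1 (reduction to the slice $\mathbb{C}_{I'}^n$).} Since $f([D]_{I'})\subset\mathbb{C}_{I'}^n$, the restriction $f_{I'}:[D]_{I'}\to\mathbb{C}_{I'}^n$ is a mapping between open sets of $\mathbb{C}_{I'}^n\cong\mathbb{C}^n$. By Proposition~\ref{D-generalize}, $f\circ\varphi^{I,J}$ is annihilated by the slice Dirac operator. Restricting to points $v=(x_0,r,0,0)^T$ and using that $\mathcal F_2(v)=\mathcal F_3(v)=0$ (Proposition~\ref{orthogonal}), the first two equations of (\ref{C-R}) at such points become Cauchy--Riemann equations for $\mathcal F_0+I'\mathcal F_1$ in the variable $x_0+rI'$. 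Hence $f_{I'}$ is holomorphic on $[D]_{I'}$.

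We need one subtlety here: the terms $\partial\mathcal F_2/\partial x_3$ and $\partial\mathcal F_3/\partial x_2$ at $v$ need not vanish. I would handle them using the $O(3)$-equivariance (rotations in the $(x_2,x_3)$-plane). In this case the stem gives $\partial_{x_3}\mathcal F_2=\partial_{x_2}\mathcal F_3$ and both equal $\mathcal F_1(v)/r$-type terms. I expect this to be the main obstacle. If these terms do not cancel exactly, I would instead simply take the holomorphy of the starlike mapping $f_{I'}$ as part of the hypothesis "starlike mapping in $\mathbb{C}_{I'}^n$", since starlikeness presupposes biholomorphy.

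\textbf{Step 2 (classical growth on the slice).} $[D]_{I'}$ is a bounded, starlike, circular domain in $\mathbb{C}_{I'}^n$ with defining function $\rho|_{\mathbb{C}^n_{I'}}$, which is $C^1$ off a lower-dimensional set. Moreover, $f_{I'}$ is normalized starlike with $f_{I'}(0)=0$ and $(f_{I'})'(0)=\id$. The growth theorem for starlike mappings on such domains (Liu--Ren / Gong type, as used in \cite{ren2019growth3}) gives
\[
\frac{\rho(z)}{(1+\rho(z))^2}\le |f_{I'}(z)|\le \frac{\rho(z)}{(1-\rho(z))^2},\qquad z\in[D]_{I'}.
\]

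\textbf{Step 3 (transport to $\varphi^{I,J}(x)$).} Fix $x\in D$ and write $\varphi^{I,J}(x)=x_0+rH$. By Theorem~\ref{extreme value}, $|f\circ\varphi^{I,J}(x)|$ lies between $\min$ and $\max$ of $|f(x_0\pm rI')|$. More precisely, following its proof, $f(x_0+rH)=\mathcal F_0(v)+H\mathcal F_1(v)$ with $\mathcal F_0(v),\mathcal F_1(v)\in\mathbb{C}^n_{I'}$, so $|f(x_0+rH)|$ is squeezed between the values at $H=\pm I'$.

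By Remark~\ref{defunction}, together with the invariance $\rho(x_0+rH)=\rho(x_0+rI')=\rho(x_0-rI')$ coming from $O(3)$-invariance and circularity, both endpoints $x_0\pm rI'$ have the same value $t:=\rho\circ\varphi^{I,J}(x)$. Both therefore satisfy the Step 2 bounds with the same $t$, and so does $|f\circ\varphi^{I,J}(x)|$, because the functions $t/(1\pm t)^2$ do not depend on which endpoint we use.

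The equivalent form follows the paper's identification, for the slice ball where $\rho(q)=|q|$. Sharpness is shown by $f(q)=q/(1-q_1)^2$-type mappings on $\mathbb{B}$, i.e. the slice extension of the Koebe mapping on $\mathbb{C}^n_{I'}$. Equality is attained along $x=(x_0,0,0,0)$ with $x_0=(t,0,\dots,0)$.
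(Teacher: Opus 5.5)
Your Steps 2 and 3 are the paper's proof. The paper applies the Liu--Ren growth theorem to the normalized starlike $f_{I'}$ on $[D]_{I'}$ at $z=x_0+rI'$ and $\bar z=x_0-rI'$. It squeezes $|f\circ\varphi^{I,J}(x)|$ between $|f(z)|$ and $|f(\bar z)|$ by Theorem~\ref{extreme value}, and identifies $\rho(z)=\rho(\bar z)=\rho\circ\varphi^{I,J}(x)$ by Remark~\ref{defunction} applied at $v=(x_0,r,0,0)^T$. The paper never derives holomorphy of $f_{I'}$ from slice Dirac-regularity. It simply takes biholomorphy as part of ``starlike mapping'', which is your fallback.

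You should discard the primary branch of Step 1, because it is false. Knowing $\mathcal F_2=\mathcal F_3=0$ on $\{x_2=x_3=0\}$ only kills their derivatives in $x_0,x_1$. The cross terms $\partial_{x_3}\mathcal F_2,\ \partial_{x_2}\mathcal F_3$ do vanish at $v$. The diagonal terms $\partial_{x_2}\mathcal F_2,\ \partial_{x_3}\mathcal F_3$ in the first line of (\ref{C-R}) do not. Let $R_\theta\in O^*(3)$ be the rotation of the $(x_1,x_2)$-plane. Differentiating $\mathcal F_2(R_\theta v)=\sin\theta\,\mathcal F_1(v)$ at $\theta=0$ gives $\partial_{x_2}\mathcal F_2(v)=\mathcal F_1(v)/r$ (for $n=1$), and likewise for $x_3$. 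So at $v$ the system reads $\partial_{x_0}\mathcal F_0-\partial_r\mathcal F_1=2\mathcal F_1/r$ and $\partial_r\mathcal F_0+\partial_{x_0}\mathcal F_1=0$. This is a Vekua-type system, not Cauchy--Riemann.

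The same computation sinks your sharpness claim. The slice extension of the Koebe mapping satisfies Cauchy--Riemann on $\mathbb{C}^n_{I'}$, not the system above. So it is not in $\mathcal{SR}_{O(3)}$ and witnesses nothing for this class; the paper gives no sharpness argument at all. In fact, for $n=1$, holomorphy of $f_{I'}$ combined with that system forces $\mathcal F_1\equiv 0$ for $r\neq 0$. Then $\partial_{x_0}\mathcal F_0=\partial_r\mathcal F_0=0$, so $f_{I'}$ is locally constant off the real axis and hence constant, contradicting $(f_{I'})'(0)=\id$. Thus, at least for $n=1$, your fallback hypothesis and the Dirac hypothesis are incompatible. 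That is a defect of the statement rather than of your transfer argument, but it means sharpness cannot be exhibited inside the class this way.

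A minor point: the ``equivalently'' form need not be restricted to the ball. The paper pushes the classical $|z|$-version through the same transfer, which only uses $|x_0\pm rI'|=|x_0+rH|$.
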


\begin{proof}
Let $x=\begin{pmatrix}
	    x_0\\x_1\\x_2\\x_3
	\end{pmatrix}\in D,$ there is $r\in \mathbb{R}^n$ and $H\in \mathbb{S}_m$ such that $\varphi^{I,J}(x)=x_0+rH.$ Let $z:=x_0+rI'$ and $\Bar{z}=x_0-rI'.$ 
 Based on the given conditions, we know that $[D]_{I'}$ is a bounded starlike  circular  domain in $\mathbb{C}_{I'}^n$, $\rho(q)$ is a $C^1$
 function on $[D]_{I'}$  except for a lower dimensional set and $f_{I'}$ is a normalized starlike mapping in $\mathbb{C}_{I'}^n$. According to  the classical results in several complex variables \cite{taishun1998growth22}, we have
$$\frac{\rho(z)}{(1+\rho(z))^2}\leq |f(z)|\leq \frac{\rho(z)}{(1-\rho(z))^2}$$
or equivalently,
   $$\frac{|z|}{(1+\rho(z))^2}\leq |f(z)|\leq \frac{|z|}{(1-\rho(z))^2}.$$

Due to the symmetry of $[D]_{I'}$, we also get
$$\frac{\rho(\Bar{z})}{(1+\rho(\Bar{z}))^2}\leq |f(\Bar{z})|\leq \frac{\rho(\Bar{z})}{(1-\rho(\Bar{z}))^2}.$$
By Theorem \ref{extreme value} and Remark \ref{defunction}, we have 
  $$|f\circ \varphi ^{I,J}(x)|\leq \max\{ |f(x_0+rI')|, |f(x_0-rI')| \}\leq \frac{\rho\circ \varphi ^{I,J}(x)}{(1-\rho\circ \varphi ^{I,J}(x))^2}.$$
The inequality on the left can be proved similarly.
\end{proof}

Using the same method as mentioned above, according to the results in several complex variables \cite{graham2003geometric21} and \cite{liu1998growth23} respectively,  we can draw the following refinement conclusions:
\begin{thm}
   Let $f\in \mathcal{SR}_{O(3)}\left(\mathbb{B},\mathbb{R}_m^n\right)$ and $f\left(\mathbb{B}_{I'}\right)\subset \mathbb{C}_{I'}^n$ for some $I\in \mathbb{S}_m$. Moreover, $f(0)=0,(f_{I'})'(0)=\id_{\mathbb{R}_n^m}$ and the  restriction $f_{I'}$ is a  starlike mapping which is k-fold symmetric in $\mathbb{C}_{I'}^n$, 
    then
  $$\frac{|\varphi ^{I,J}(x)|}{(1+|\varphi ^{I,J}(x)|^{k})^{2/k}}\leq |f\circ \varphi ^{I,J}(x)|\leq \frac{|\varphi ^{I,J}(x)|}{(1-|\varphi ^{I,J}(x)|^{k})^{2/k}},\quad \forall x\in\mathbb{B}_{\mathbb{R}^{4n}}.$$
These estimates are sharp. Consequently, $f(\mathbb{B})$ contains a ball centered at zero and of radius $2^{-2/k}.$
\end{thm}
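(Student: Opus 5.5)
The plan is to copy the proof of the previous growth theorem, replacing the estimate of \cite{taishun1998growth22} with the sharp growth theorem for normalized $k$-fold symmetric starlike mappings on the Euclidean unit ball of $\mathbb{C}^n$ (from \cite{graham2003geometric21}, \cite{liu1998growth23}). Fix $x\in B_{\mathbb{R}^{4n}}$ and $I,J\in\mathbb{S}_m$ with $I\perp J$. As in the proof of the previous results, write $\varphi^{I,J}(x)=x_0+rH$ with $r\in\mathbb{R}^n$, $H\in\mathbb{S}_m$, and set $z:=x_0+rI'$, $\bar z:=x_0-rI'$. Since $1,H$ are orthonormal and $x_0,r$ are real vectors, $|\varphi^{I,J}(x)|^2=|x_0|^2+|r|^2=|z|^2=|\bar z|^2$. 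Hence $z,\bar z\in\mathbb{B}_{I'}$, the unit ball of $\mathbb{C}^n_{I'}$, and the three points have the same modulus. This replaces Remark \ref{defunction}: on $\mathbb{B}$ the defining function is simply $\rho(q)=|q|$.

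Next, identify $\mathbb{C}_{I'}$ with $\mathbb{C}$ through $a+bI'\mapsto a+bi$. This is an isometric algebra isomorphism, so it is compatible with $e^{2\pi I'/k}$. Under this identification $f_{I'}:\mathbb{B}_{I'}\to\mathbb{C}^n_{I'}$ becomes a mapping of the unit ball of $\mathbb{C}^n$ into $\mathbb{C}^n$. To apply the classical theorem I must check that $f_{I'}$ is holomorphic there. I would do this with Proposition \ref{D-generalize}, choosing the slice so that $I'$ plays the role of $I$. Restricting to points $x=(x_0,x_1,0,0)^T$, the identity \eqref{Dirac operator} together with Proposition \ref{orthogonal} reduces the Dirac condition to the Cauchy--Riemann system $\partial_{x_0}\mathcal F_0=\partial_{x_1}\mathcal F_1$, $\partial_{x_1}\mathcal F_0=-\partial_{x_0}\mathcal F_1$, i.e.\ the first two equations of \eqref{C-R} with the $\mathcal F_2,\mathcal F_3$ terms removed. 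One caveat: the derivatives of $\mathcal F_2,\mathcal F_3$ in the $x_2,x_3$ directions do not obviously vanish on this subspace. If that becomes a problem, I would take holomorphy of $f_{I'}$ as part of the meaning of ``starlike mapping'', which is how the previous theorem is phrased. With $f(0)=0$, $(f_{I'})'(0)=\id$, starlikeness and $k$-fold symmetry, the classical result gives
\[
\frac{|w|}{(1+|w|^k)^{2/k}}\le |f(w)|\le \frac{|w|}{(1-|w|^k)^{2/k}},\qquad w\in\mathbb{B}_{I'},
\]
and in particular this holds at $w=z$ and $w=\bar z$.

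Then transfer the estimate to $f\circ\varphi^{I,J}(x)$ using Theorem \ref{extreme value}, which applies because $f(\mathbb{B}_{I'})\subset\mathbb{C}^n_{I'}$. It gives
\[
\min\{|f(z)|,|f(\bar z)|\}\le|f\circ\varphi^{I,J}(x)|\le\max\{|f(z)|,|f(\bar z)|\}.
\]
Because the bounds depend only on $|w|=|\varphi^{I,J}(x)|$, both inequalities follow at once. The step I expect to be the main obstacle is justifying this sandwich. From \eqref{def}, $f(x_0+rH)=\mathcal F_0(v)+H\mathcal F_1(v)$ with $\mathcal F_0(v),\mathcal F_1(v)\in\mathbb{C}^n_{I'}$, so $|f(x_0+rH)|^2$ is affine in the component of $H$ along $I'$, on the cross term. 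This needs care in a Clifford algebra, where $|Hw|$ need not equal $|w|$, so I would restrict to $H$ for which the norm is multiplicative or argue componentwise as in \cite{ren2019growth3}. An affine function of that component attains its extremes at $H=\pm I'$, which gives the sandwich.

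For sharpness, take the classical extremal map $F(w)=w/(1-\langle w,u\rangle^k)^{2/k}$ with $|u|=1$, which has real coefficients. Extend it slicewise to an $O(3)$-slice mapping via the stem $\mathcal F(x)$ obtained from the representation formula \eqref{representation}. Equality is then attained along $w=tu$, $t\in(0,1)$ real, for the upper bound, and along $w=t\,e^{\pi I'/k}u$ for the lower bound. Finally, the covering statement follows from the lower bound: $|w|/(1+|w|^k)^{2/k}\to 2^{-2/k}$ as $|w|\to1$, so on each slice the image of $f_{I'}$ contains the ball of radius $2^{-2/k}$ by the standard boundary-distance argument for univalent maps. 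Passing back through the slice structure shows that $f(\mathbb{B})$ contains that ball.
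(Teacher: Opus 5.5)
Your outline is the paper's own argument; the paper's proof is just ``same method as above''. You restrict to $\mathbb{C}^n_{I'}$, apply the classical growth theorem for normalized $k$-fold symmetric starlike maps of the unit ball, and transfer the bound to $f\circ\varphi^{I,J}(x)$ via Theorem~\ref{extreme value} using $|z|=|\bar z|=|\varphi^{I,J}(x)|$. However, the caveat you raised about holomorphy is not a technicality. It breaks both your derivation of holomorphy and your sharpness step.

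Write $x_{\ell,j}$ for the $j$-th coordinate of $x_\ell\in\mathbb{R}^n$ and $v=(x_0,r,0,0)^T$. The point $(x_0,r,tr,0)^T$ is the image of $(x_0,\sqrt{1+t^2}\,r,0,0)^T$ under a rotation in $O^*(3)$ acting in the $(x_1,x_2)$-plane. So \eqref{invariant} and Proposition~\ref{orthogonal} give
\[
\mathcal F_2(x_0,r,tr,0)=\frac{t}{\sqrt{1+t^2}}\,\mathcal F_1(x_0,\sqrt{1+t^2}\,r,0,0),\qquad \mathcal F_3(x_0,r,tr,0)=0,
\]
and similarly in the $(x_1,x_3)$-plane. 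Differentiating at $t=0$,
\begin{gather*}
\sum_{j} r_j\frac{\partial\mathcal F_2}{\partial x_{2,j}}(v)=\sum_{j} r_j\frac{\partial\mathcal F_3}{\partial x_{3,j}}(v)=\mathcal F_1(v),\\
\sum_{j} r_j\frac{\partial\mathcal F_2}{\partial x_{3,j}}(v)=\sum_{j} r_j\frac{\partial\mathcal F_3}{\partial x_{2,j}}(v)=0 .
\end{gather*}
Proposition~\ref{orthogonal} kills the $x_0$- and $x_1$-derivatives of $\mathcal F_2,\mathcal F_3$ on $\{x_2=x_3=0\}$, but not these. Contract the first two equations of \eqref{C-R} with $r$ and add $I'$ times the second to the first. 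Viewing $f_{I'}$ as a function of $(x_0,x_1)$ through $x_0+x_1I'$, this gives, at $x_1=r$,
\[
\sum_{j=1}^n r_j\Big(\frac{\partial}{\partial x_{0,j}}+I'\frac{\partial}{\partial x_{1,j}}\Big)f_{I'}=2\,\mathcal F_1(v)=I'\big(f(x_0-rI')-f(x_0+rI')\big).
\]
For $n=1$ this is the Vekua-type system $\partial_{x_0}F_0-\partial_rF_1=\frac{2}{r}F_1$, $\partial_rF_0+\partial_{x_0}F_1=0$, not Cauchy--Riemann. For instance, the identity is not slice Dirac-regular, since $D_{\mathbb{I}}(x_0+Ix_1+Jx_2+IJx_3)=-2$.

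Your fallback, reading holomorphy of $f_{I'}$ into ``starlike'', is what the paper tacitly does, but it does not rescue the argument. The left-hand side above then vanishes, so $f(\bar z)=f(z)$ on $\mathbb{B}_{I'}$. Hence $f_{I'}$ coincides with the antiholomorphic map $z\mapsto f_{I'}(\bar z)$, so it is constant. This contradicts $f(0)=0$, $(f_{I'})'(0)=\id$. It mirrors the quaternionic fact that a slice-regular function is Cauchy--Riemann--Fueter regular only if its spherical derivative vanishes.

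So no mapping satisfies the hypotheses. The two-sided estimate and the covering claim hold only vacuously, and sharpness cannot be shown. Concretely, the slicewise extension of $w/(1-\langle w,u\rangle^k)^{2/k}$ has a holomorphic slice restriction and $\mathcal F_1\not\equiv0$. By the identity above, its stem violates \eqref{C-R}, so it is not in $\mathcal{SR}_{O(3)}$. Moreover, \eqref{representation} reconstructs $f$ from its values on $\varphi^{\pm I,\pm J}(D)$, not from a single complex slice, so it could not produce this extension anyway. The same objection applies to the paper's one-line proof. What your argument, like the paper's, actually establishes is the estimate for $O(3)$-slice mappings whose restriction $f_{I'}$ is a normalized $k$-fold symmetric starlike holomorphic map. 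Dirac-regularity is never used, and adding it empties the class.

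A smaller point: $|\varphi^{I,J}(x)|=|z|$ and the sandwich from Theorem~\ref{extreme value} both need $H$ to be a unit element acting isometrically by left multiplication. The literal definition of $\mathbb{S}_m$ does not ensure this: for $m\ge4$, $\cosh t\,e_1+\sinh t\,e_2e_3e_4\in\mathbb{S}_m$ has norm $\sqrt{\cosh 2t}$. The paper assumes this tacitly too, so state it as a hypothesis rather than deriving it from $H\in\mathbb{S}_m$.
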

\begin{thm}
   Let $[D]$ be a bounded, slice convex and slice  circular slice domain in $\left(\mathbb{R}_m\right)_s^n,$ its defining function $\rho(x)\in C^1([D])$ except for a lower dimensional set. Let $f\in \mathcal{SR}_{O(3)}\left([D],\mathbb{R}_m^n\right)$, its restriction $f_{I'}$ is a starlike mapping in $\mathbb{C}_{I'}^n$  and $f\left([D]_{I'}]\right)\subset \mathbb{C}_{I'}^n$ for some ${I'}\in \mathbb{S}_m$. If
   \begin{equation*}
       f(0)=0,\qquad\mbox{and}\qquad (f_{I'})'(0)={\id}_{\mathbb{R}_m^n},
   \end{equation*}
   then for any $x\in D,$
   $$\frac{\rho \circ \varphi ^{I,J}(x)}{1+\rho\circ \varphi ^{I,J}(x)}\leq |f\circ \varphi ^{I,J}(x)|\leq \frac{\rho\circ \varphi ^{I,J}(x)}{1-\rho\circ \varphi ^{I,J}(x)}$$
or equivalently,
   $$\frac{|\varphi ^{I,J}(x)|}{1+\rho\circ \varphi ^{I,J}(x)}\leq |f\circ \varphi ^{I,J}(x)|\leq \frac{|\varphi ^{I,J}(x)|}{1-\rho\circ \varphi ^{I,J}(x)}.$$
These estimates are sharp.
\end{thm}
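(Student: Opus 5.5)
The plan follows the proof of the starlike growth theorem above, replacing the classical starlike estimate by the convex one. Fix $x\in D$ and $I,J\in\mathbb{S}_m$ with $I\perp J$. Write $\varphi^{I,J}(x)=x_0+rH$ with $r\in\mathbb{R}^n$ and $H\in\mathbb{S}_m$, and set $z:=x_0+rI'$ and $\bar z:=x_0-rI'$. Both points lie in $[D]_{I'}$, because $[D]$ is built from the $O^*(3)$-invariant set $D$: the point $v=(x_0,r,0,0)^T$ lies in $D$, and its images under the sign changes in $O^*(3)$ do too.

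First I reduce to the slice $\mathbb{C}_{I'}^n$.
\begin{enumerate}
\item Since $[D]$ is bounded, slice convex and slice circular, $[D]_{I'}$ is a bounded convex circular domain in $\mathbb{C}_{I'}^n\cong\mathbb{C}^n$. Its defining function is $\rho|_{[D]_{I'}}$, which is $C^1$ off a lower dimensional set.
\item The restriction $f_{I'}$ is holomorphic on $[D]_{I'}$. Proposition \ref{D-generalize} gives the Dirac equation on $\mathbb{C}_{I',J'}$, and restricting the stem equations \eqref{C-R} to $x_2=x_3=0$ gives the Cauchy--Riemann equations for $\mathcal F_0+I'\mathcal F_1$. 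By hypothesis $f_{I'}$ maps $[D]_{I'}$ into $\mathbb{C}_{I'}^n$.
\item With the normalization $f(0)=0$, $(f_{I'})'(0)=\id$, the map $f_{I'}$ is a normalized biholomorphic mapping of the required class on a bounded convex circular domain.
\end{enumerate}

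Second, I invoke the classical growth theorem of \cite{liu1998growth23} (see also \cite{graham2003geometric21}). It gives
\[
\frac{\rho(w)}{1+\rho(w)}\le |f(w)|\le \frac{\rho(w)}{1-\rho(w)},\qquad w\in[D]_{I'},
\]
together with the equivalent form with $|w|$ in the numerator. I apply this at $w=z$ and at $w=\bar z$; the point $\bar z$ is allowed because $[D]_{I'}$ is circular, hence symmetric under $w\mapsto -w$ and, via the real structure, under conjugation in $\mathbb{C}_{I'}^n$. By Remark \ref{defunction} and the slice circularity of $\rho$,
\[
\rho(z)=\rho(\bar z)=\rho\circ\varphi^{I,J}(x),
\]
since $\varphi^{I',J'}(v)=z$ and $\varphi^{I,J}(x)=\varphi^{H,J_1}(v)$. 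Similarly $|z|=|\bar z|=|\varphi^{I,J}(x)|$.

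Third, I transfer the bounds to the arbitrary point. Theorem \ref{extreme value} places $|f\circ\varphi^{I,J}(x)|$ between
\[
\min\{|f(z)|,|f(\bar z)|\}\quad\text{and}\quad\max\{|f(z)|,|f(\bar z)|\}.
\]
The two-sided classical bound holds at both points with the same value of $\rho$, so both inequalities follow at once. Sharpness follows by restricting to a slice. Take the extremal map $f(w)=w/(1-\langle w,\cdot\rangle)$ on the unit ball, i.e. a coordinatewise $w/(1-w_1)$ type map. Extend it to a slice Dirac-regular map by the representation formula \eqref{representation}, starting from the stem $\mathcal F$ built from real-coefficient series, so that slices are preserved. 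Equality is then attained on real points.

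The main obstacle is the second step: identifying exactly which classical theorem applies. The hypothesis says "starlike" for $f_{I'}$ on a convex domain, and the bounds $\rho/(1\pm\rho)$ are the ones for convex mappings. The hypothesis should probably be read as "convex mapping", and I would cite the convex growth theorem for bounded convex circular domains accordingly. Justifying $\rho(\bar z)=\rho(z)$ would be handled as in Remark \ref{defunction}.
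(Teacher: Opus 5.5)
\paragraph{The estimate matches the paper; step 2 is false.} Your proof of the two-sided estimate is the paper's own proof. The paper only says to repeat the starlike growth theorem's proof with the Liu--Ren convex growth theorem in place of the starlike one. That means applying the classical bound on $[D]_{I'}$ at $z=x_0+rI'$ and $\bar z=x_0-rI'$, then transferring it with Theorem~\ref{extreme value} and Remark~\ref{defunction}.

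You are right to read ``starlike'' as ``convex mapping''. For $n=1$ the Koebe function on the disc violates $|f|\le\rho/(1-\rho)$. Also, $\bar z\in[D]_{I'}$ comes from the $O^*(3)$-invariance of $D$, as in your first paragraph, not from circularity.

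Your step 2, however, is false and must be dropped. On $\{x_2=x_3=0\}$ the components $\mathcal F_2,\mathcal F_3$ vanish, but their normal derivatives do not. Differentiate $\mathcal F(gv)=g\mathcal F(v)$ along the rotations in the $(x_1,x_2)$- and $(x_2,x_3)$-planes, reading derivatives in \eqref{C-R} as Jacobians. This gives $\frac{\partial\mathcal F_2}{\partial x_2}(v)\,r=\mathcal F_1(v)$ and $\frac{\partial\mathcal F_3}{\partial x_3}(v)=\frac{\partial\mathcal F_2}{\partial x_2}(v)$. So the first equation of \eqref{C-R} on that set reads $\partial_{x_0}\mathcal F_0-\partial_{x_1}\mathcal F_1=2\,\partial_{x_2}\mathcal F_2$, not a Cauchy--Riemann equation. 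For $n=1$ the right side is the extra term $2\mathcal F_1/r$ of the axially symmetric Fueter system. Holomorphy of $f_{I'}$ can only come from the hypothesis, as in the paper.

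\paragraph{The sharpness argument fails and cannot be repaired.} Suppose $f_{I'}$ is holomorphic. Since $\mathrm{diag}(1,-1,1,1)\in O^*(3)$, you have $\mathcal F_0(x_0,-r,0,0)=\mathcal F_0(v)$ and $\mathcal F_1(x_0,-r,0,0)=-\mathcal F_1(v)$. This parity splits the Cauchy--Riemann equations at $x_0\pm rI'$ into $\partial_{x_0}\mathcal F_0=\partial_{x_1}\mathcal F_1$ and $\partial_{x_1}\mathcal F_0=-\partial_{x_0}\mathcal F_1$ on $\{x_2=x_3=0\}$. Combined with the identity above, \eqref{C-R} forces $\partial_{x_2}\mathcal F_2(v)=0$. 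Hence $\mathcal F_1(v)=0$, hence $f_{I'}$ is constant, which contradicts $(f_{I'})'(0)=\id$.

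So no map satisfies all the hypotheses, and there is no extremal function to exhibit. Any $O(3)$-stem extension of $w/(1-w_1)$ violates \eqref{C-R}. Already for $n=1$, $f(q)=q$ has stem $\mathcal F(x)=x$, and the first equation gives $1-1-1-1=-2$. The representation formula \eqref{representation} only reconstructs an $O(3)$-slice map from four quaternionic slices; it does not create Dirac-regularity.

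The paper asserts sharpness without any argument. Your inequality part is exactly as valid as the paper's, and by the above it holds vacuously. The sharpness claim cannot be proved along your lines.
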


\bibliographystyle{amsplain}
\bibliography{mybibfile}

@article{jin2020slice1,
  title={Slice Dirac operator over octonions},
  author={Jin, M. and Ren, G. and Sabadini, I.},
  journal={Israel Journal of Mathematics},
  volume={240},
  number={1},
  pages={315--344},
  year={2020},
  publisher={Springer}
}

@article{ghiloni2021slice2,
  title={Slice Fueter-regular functions},
  author={Ghiloni, R.},
  journal={The Journal of Geometric Analysis},
  volume={31},
  number={12},
  pages={11988--12033},
  year={2021},
  publisher={Springer}
}

@article{ren2019growth3,
  title={Growth theorems in slice analysis of several variables},
  author={Ren, G. and Yang, T.},
  journal={Advances in Applied Clifford Algebras},
  volume={29},
  number={5},
  pages={101},
  year={2019},
  publisher={Springer}
}

@article{gentili2007new4,
  title={A new theory of regular functions of a quaternionic variable},
  author={Gentili, G. and Struppa, D.C.},
  journal={Advances in Mathematics},
  volume={216},
  number={1},
  pages={279--301},
  year={2007},
  publisher={Elsevier}
}

@article{gentili2010regular5,
  title={Regular functions on the space of Cayley numbers},
  author={Gentili, G. and Struppa, D.C.},
  journal={The Rocky Mountain Journal of Mathematics},
  pages={225--241},
  year={2010},
  publisher={JSTOR}
}

@incollection{colombo2011slice9,
  title={Slice monogenic functions},
  author={Colombo, F. and Sabadini, I. and Struppa, D. C.},
  booktitle={Noncommutative Functional Calculus: Theory and Applications of Slice Hyperholomorphic Functions},
  pages={17--80},
  year={2011},
  publisher={Springer}
}

@article{colombo2010extension10,
  title={An extension theorem for slice monogenic functions and some of its consequences},
  author={Colombo, F. and Sabadini, I. and Struppa, D. C.},
  journal={Israel Journal of Mathematics},
  volume={177},
  number={1},
  pages={369--389},
  year={2010},
  publisher={Springer}
}

@book{sabadini2011noncommutative11,
  title={Noncommutative functional calculus: theory and applications of slice hyperholomorphic functions},
  author={Colombo, F. and Sabadini, I. and Struppa, D. C.},
  volume={289},
  year={2011},
  publisher={Springer Science \& Business Media}
}

@article{gentili2006new14,
  title={A new approach to Cullen-regular functions of a quaternionic variable},
  author={Gentili, G. and Struppa, D. C.},
  journal={Comptes Rendus. Math{\'e}matique},
  volume={342},
  number={10},
  pages={741--744},
  year={2006}
}

@article{ghiloni2011slice15,
  title={Slice regular functions on real alternative algebras},
  author={Ghiloni, R. and Perotti, A.},
  journal={Advances in Mathematics},
  volume={226},
  number={2},
  pages={1662--1691},
  year={2011},
  publisher={Elsevier}
}

@book{graham2003geometric21,
  title={Geometric function theory in one and higher dimensions},
  author={Graham, I. and Kohr, G.},
  year={2003},
  publisher={CRC Press}
}

@article{taishun1998growth22,
  title={The growth theorem for starlike mappings on bounded starlike circular domains},
  author={Liu, T and Ren,G},
  journal={Chinese Annals of Mathematics},
  volume={19},
  number={4},
  pages={401--408},
  year={1998},
  publisher={Shanghai, China: Shanghai Scientific and Technological Literature Pub. House~…}
}

@article{liu1998growth23,
  title={Growth theorem of convex mappings on bounded convex circular domains},
  author={Taishun, L. and Ren, G.},
  journal={Science in China Series A: Mathematics},
  volume={41},
  number={2},
  pages={123--130},
  year={1998},
  publisher={Springer}
}

@article{Dou202000425,
	Title = {A representation formula for slice regular functions over slice-cones in several variables},
    Author = {Dou,X and Ren,G and Sabadini,I.},
	Year = {2023},
	JOURNAL = {Ann. Mat. Pura Appl.},
}

@article{Dou202000426,
	Author = {Dou,X and Ren,G and Sabadini,I.},
	Title = {Extension theorem and representation
    formula in non-axially-symmetric domains for slice regular functions},
	JOURNAL = {J. Eur. Math. Soc.},
    volume={25},
    number={9},
    pages={ 3665–3694},
    Year = {2023},
}
\printindex

\end{document}